\newtheorem{THM}{Theorem}[section]
\newtheorem{LEM}[THM]{Lemma}
\newtheorem{PROP}[THM]{Proposition}
\newtheorem{EX}[THM]{Example}
\newtheorem{PROBLEM}[THM]{Problem}
\def\ord(#1){|#1|}
\def\bigord(#1){\big|#1\big|}
\newcommand\abs[1]{\lvert #1\rvert}
\def\shift(#1)(#2){\!\!\downarrow^{(#1)}_{(#2)}} 
\def\ucl(#1){\lfloor #1 \rfloor}
\def\dcl(#1){\lceil #1 \rceil}
\newcommand\B{\mathcal B}
\def\F{\mathcal F}
\renewcommand\H{\mathcal H}
\renewcommand\P{\mathcal P}
\def\S{\mathcal S}
\def\sub{\subseteq}
\def\supe{\supseteq}
\def\sm{\smallsetminus}
\def\td{tree-decom\-po\-si\-tion}
\newcommand\COMMENT[1]{}
\def\?#1{\vadjust{\vbox to 0pt{\vss\vskip-8pt\leftline{%
     \llap{\hbox{\vbox{\pretolerance=-1
     \doublehyphendemerits=0\finalhyphendemerits=0
     \hsize16truemm\tolerance=10000\small
     \lineskip=0pt\lineskiplimit=0pt
     \rightskip=0pt plus16truemm\baselineskip8pt\noindent
     \hskip0pt        
     #1\endgraf}\hskip7truemm}}}\vss}}}
\title{Unifying duality theorems for width parameters in graphs and matroids\smallskip\\ II.~General duality}
\author{Reinhard Diestel \and Sang-il Oum%
\thanks{The second-named author was supported by Basic Science Research
  Program through the National Research Foundation of Korea (NRF)
  funded by  the Ministry of Science, ICT \& Future Planning
  (2011-0011653).}
 }
\begin{document}
\abovedisplayshortskip=-3pt plus3pt
\belowdisplayshortskip=6pt

\maketitle

\begin{abstract}\noindent
  We prove a general duality theorem for tangle-like dense objects in combinatorial structures such as graphs and matroids. This paper continues, and assumes familiarity with, the theory developed in~\cite{DiestelOumDualityI}.
\end{abstract}

\section{Introduction}\label{sec:intro}

This is the second of two papers on the duality between certain `dense objects' in a combinatorial structure such as a graph or a matroid, and tree-like structures to which any graph or matroid that does not contain such a `dense object' must conform. Its `tree-likeness' is then measured by a so-called \emph{width parameter}, which is the smaller the more the graph or matroid conforms to such a tree-shape. The `dense objects', which traditionally come in various guises, are in our framework cast uniformly in a way akin to tangles: as orientations of certain separation systems of the graphs.

In Part~I of this paper~\cite{DiestelOumDualityI} we proved a duality theorem which unifies and extends the duality theorems for the classical width parameters of graphs and matroids, such as branch-width, tree-width and path-width. Our aim, however, had not been to find such a theorem. What we were looking for was a duality theorem for more general width parameters still, one that would also cover more recently studied tangle-like objects such as `blocks'~\cite{confing, ForcingBlocks} and `profiles'~\cite{profiles, CDHH13CanonicalAlg, CDHH13CanonicalParts}.

Amusingly, our theorem from~\cite{DiestelOumDualityI} covers neither blocks nor profiles, although it does cover all those other parameters, and blocks are similar to tangles while profiles are sandwiched between tangles and brambles (and generalize blocks). We shall see in this paper why our attempt had to fail: we prove that both blocks and profiles need more general obstructions to witness their non-existence than the structure trees used in~\cite{DiestelOumDualityI}.

As our main positive result, we shall prove a duality theorem that does cover such general tangle-like objects as blocks and profiles (and countless others). Since the obstructions it identifies for their non-existence are more general than trees, it will not imply our results from~\cite{DiestelOumDualityI} nor follow from them.

We shall use the same terminology as in~\cite{DiestelOumDualityI}; see Section~\ref{sec:background} for what exactly we shall assume the reader is familiar with. In Section~\ref{sec:stars} we describe the general tangle-like structures we shall cover, while in Sections~\ref{sec:obstructions}--\ref{sec:Sgraphs} we describe the more general tree-like structures to witness their non-existence. Our General Duality Theorem will be proved in Section~\ref{sec:Thm}. In Section~\ref{sec:apps} we apply it to blocks and pro\-files (which are also defined formally there), and show that for these structures our duality theorem is best possible: there are graphs that have neither a block or profile nor admit a tree-like decomposition as in the duality theorem of~\cite{DiestelOumDualityI}, thus showing that the more general tree-like structures employed by our main result are best-possible not only for a general duality theorem but also for just blocks and profiles.

\section{Background needed for this paper}\label{sec:background}

We assume that the reader is familiar with the terminology set up in Part~I\penalty-200\ \cite[Section~2]{DiestelOumDualityI}, and with the proof of its Weak Duality Theorem~\cite[Section~3]{DiestelOumDualityI}. Let us restate this theorem:

\begin{THM}[Weak Duality Theorem]\label{thm:weak}
  Let ${S}$ be a separation system of a set~$V$, and let ${S}^-\sub S$ contain every separation~of the form $(A,V)\in \mathcal S$. Let $\F$ be a set of stars in ${S}$. Then exactly one of the following holds:
\begin{enumerate}[\rm(i)]\itemsep0pt\vskip-3pt\vskip0pt
  \item There exists an ${S}$-tree over $\F$ rooted in ${S}^-$.
  \item There exists an $\F$-avoiding orientation of ${S}$ extending ${S}^-$.
  \end{enumerate}
\end{THM}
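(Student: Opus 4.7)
The plan is to prove the two halves of ``exactly one'' separately: first that (i) and (ii) cannot both hold, then that at least one of them must hold.

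For the first half, suppose we have both an $S$-tree $(T,\alpha)$ over $\F$ rooted in $S^-$ and an $\F$-avoiding orientation $O\supe S^-$ of~$S$. I would use $O$ to orient each edge of $T$, directing $e=xy$ from $x$ to $y$ whenever $\alpha(x,y)\in O$; since $O$ is an orientation of $S$, this assignment is well defined. Because $T$ is a finite tree with this induced orientation, a standard walking argument locates a node $t$ whose outgoing star $\{\alpha(t,y):y\sim t\}$ lies entirely in $O$. The rootedness in $S^-\sub O$ is used to ensure that $t$ can be taken to be a non-root node, so that its star falls under the ``over $\F$'' condition and therefore lies in $\F$. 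This produces a star from $\F$ contained in $O$, contradicting $\F$-avoidance.

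For the second half, suppose (i) fails, and construct an $\F$-avoiding orientation extending $S^-$ greedily. Let $P$ be a maximal consistent subset of $S$ (containing at most one of $s,\overline{s}$ for each $s$) with $S^-\sub P$, subject to the invariant that no $S$-tree over $\F$ is rooted in $P$. Such a $P$ exists because $S^-$ itself satisfies the invariant by the failure of~(i), and enlargement terminates by finiteness of $S$ (or by Zorn's Lemma). I claim $P$ satisfies~(ii). First, $P$ is an orientation: if some $s$ had neither $s$ nor $\overline{s}$ in $P$, then by maximality both $P\cup\{s\}$ and $P\cup\{\overline{s}\}$ would admit $S$-trees $(T_1,\alpha_1)$ and $(T_2,\alpha_2)$ over $\F$ rooted in them, and one could glue them by identifying a root edge of $T_1$ labelled $s$ with a root edge of $T_2$ labelled $\overline{s}$; the resulting $S$-tree over $\F$ would be rooted in $P$ (since the glued edge becomes internal and no longer part of the root interface), contradicting the invariant. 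Second, $P$ is $\F$-avoiding: if some star $\sigma\in\F$ were contained in $P$, the trivial one-node $S$-tree whose central vertex carries the star $\sigma$ would be rooted in $P$ and over $\F$, again contradicting the invariant.

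The step I expect to be the main obstacle is the gluing construction in the extension argument. One must verify that the joined tree is a legitimate $S$-tree over $\F$ rooted in $P$: the stars at every node, including at the merged endpoints of the glued edge, must remain in $\F$, and the root interface of the glued tree must consist exactly of the remaining root separations of $T_1$ and $T_2$, all of which already lie in $P$. Executing this carefully relies on the precise definitions of ``$S$-tree'', ``over $\F$'', and ``rooted in $P$'' from Part~I, and in particular on the observation that $s$ and $\overline{s}$, once identified into a single internal edge, drop out of the root interface of the glued tree.
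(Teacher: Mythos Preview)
The paper does not prove this theorem here; it is quoted from Part~I. Your overall strategy---the sink argument for ``not both'', and a maximality/gluing argument for ``at least one''---is the standard one, and matches the template visible in the proof of Theorem~\ref{thm:gen}.

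There is, however, a real gap in your gluing step. You glue a single $s$-leaf of~$T_1$ to a single $\bar s$-leaf of~$T_2$ and assert that the result is rooted in~$P$. But $T_1$ may well have further leaves with leaf separation~$s$ (it is only rooted in $P\cup\{s\}$), and $T_2$ may have further $\bar s$-leaves; these survive the gluing, so the new tree is in general only rooted in $P\cup\{s,\bar s\}$, which does not contradict the defining property of~$P$. The missing ingredient is precisely the hypothesis that $S^-$ contains every separation of the form~$(A,V)$, which you never invoke. Since $\F$ consists of stars, the map~$\alpha$ is monotone along directed paths in an $S$-tree over~$\F$: if $x_1,x_2$ were two $s$-leaves of~$T_1$, the path between them would give $s=\alpha(x_1,\cdot)\le\alpha(\cdot,x_2)=\bar s$, whence $X\subseteq Y$, so $Y=V$ and $s=(X,V)\in S^-\subseteq P$, contrary to $s\notin P$. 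Thus each~$T_i$ has a \emph{unique} bad leaf, and only then does your one-edge gluing produce a tree rooted in~$P$. This is where the proof actually uses the hypothesis on~$S^-$; it is not merely a matter of ``checking definitions carefully''.

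Two smaller slips. In the first half you want a \emph{sink}, not a source: the set required to lie in~$\F$ is the incoming star $\{\alpha(y,t):y\sim t\}$, not the outgoing one $\{\alpha(t,y)\}$. And your ``one-node $S$-tree'' for a star $\sigma\subseteq P$ should be a star with $|\sigma|$ leaves and one centre; a single node carries no edges and hence no~$\alpha$.
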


\noindent
The conclusion of the Strong Duality Theorem~\cite{DiestelOumDualityI} differs from this in one crucial detail: it asks that the orientations in (ii) be consistent, that they never orient two separations away from each other.%
   \footnote{\dots which indeed makes little sense if we think of these orientations as pointing to some dense object.}
  This comes at the price of having to restrict $S$ and~$\F$ in the premise, but we shall not need the technicalities of this restriction in this paper.

As pointed out earlier, our motivation for starting this line of research was to find a duality theorem, and associated width parameter, for two notions of `dense objects' that have recently received some attention, so-called \emph{blocks} and \emph{profiles} (defined at the start of Section~\ref{sec:apps}).

As it turns out, these cannot be captured in the framework developed in~\cite{DiestelOumDualityI}, as (consistent) orientations of separation systems avoiding a certain collection~$\F$ of stars of separations. Our General Duality Theorem will therefore relax this requirement by allowing $\F$ to contain arbitrary `forbidden' sets of separations. However, we shall see in Section~\ref{sec:stars} that it will suffice to consider so-called `weak stars' as elements of~$\F$.

In Sections \ref{sec:obstructions}--\ref{sec:Sgraphs} we deal with the other side of the duality, the tree-structure. It turns out that this, too, has to be relaxed for any duality theorem for blocks or profiles. Of course, we would have to allow $S$-trees over weak stars rather than just over stars as in~\cite{DiestelOumDualityI}, but even this is not enough: we need to relax the trees to certain graphs with few cycles, which we shall call \emph{$S$-graphs}.

\section{From stars to weak stars}\label{sec:stars}

Recall that a set $S = \{\,(A_i,B_i) : i=1,\dots,n\,\}$ of separations of a set $V$ is a \emph{star} if these separations are nested and point towards each other, that is, if $(A_i,B_i)\le (B_j,A_j)$ for all distinct $i,j\le n$. In both the Weak and the Strong Duality Theorem proved in~\cite{DiestelOumDualityI}, the sets of separations that were forbidden in the orientations of a separation system defining a particular `dense object', those in~$\F$, were stars.

In the General Duality Theorem we shall prove here, $\F$~can be an arbitrary collection of sets of separations. It will be good, however, to be able to restrict this arbitrariness if desired: the smaller we can make~$\F$, the easier will it be to show that an orientation is $\F$-avoiding.%
   \COMMENT{}
   In this section we show that $\F$ can always be restricted to the `weak stars' it contains.%
   \COMMENT{}

   \begin{figure}[htpb]
\centering
   	  \includegraphics{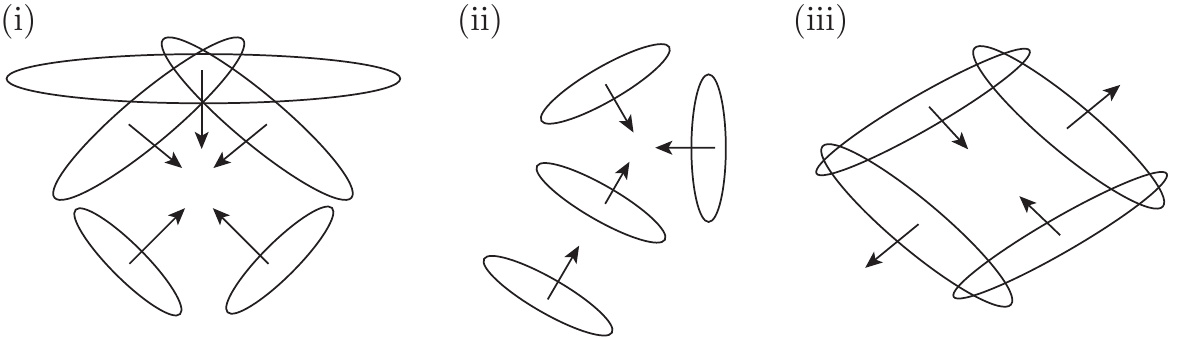}
   	  \caption{The separations in (i) form a weak star; those in (ii) and (iii) do not.}
   \label{fig:weakstar}
   \end{figure}

A set of separations is a \emph{weak star} if it is a consistent $\le$-antichain (Fig.~\ref{fig:weakstar}). Clearly, antisymmetric%
\footnote{A set $S$ of separations is \emph{antisymmetric} if $(B,A)\notin S$ whenever $(A,B)\in S$.}
stars of proper separations%
   \COMMENT{}
   are weak stars, and a weak star is a star if and only if it is nested. Given a set $S$ of separations, let $S^+\sub S$ denote the set of its $\le$-maximal elements. Then, for any collection $\F$ of sets of separations, all the elements of
  $$\F^* := \{\, S^+\! : S\in\F\text{ and $S$ is consistent}\,\}$$
 are weak stars. Note that, formally, $\F^*$~need not be a subset of~$\F$. But in all our applications it will be,%
   \COMMENT{}
   and if it is, it is just the subset of $\F$ consisting of all its weak stars.%
   \COMMENT{}

Given two sets $S,S'$ of separations of~$V\!$,  let us define
 $$S\le S'\ :\Leftrightarrow\ \exists f\colon S\to S'\text{ such that } (A,B)\le f(A,B)\text{ for all }(A,B)\in S.$$
This is a reflexive and transitive relation on the sets of separations of~$V\!$, and on the weak stars it is also antisymmetric.%
   \COMMENT{}
   Given any collection $\F$ of sets of separations, we let
 $$\F^- :=\, \{\, S : S \text{ is $\le$-minimal in } \F^*\}.$$

\begin{LEM}\label{lem:stars}
The following statements are equivalent for every consistent orientation $O$ of a finite%
   \COMMENT{}
   separation system $R$ of~$V\!$ and $\F\sub 2^R$.
\begin{enumerate}[\rm(i)]\itemsep=0pt\vskip-3pt\vskip0pt
   \item $O$ avoids~$\F$.
   \item $O$ avoids~$\F^*$.
   \item $O$ avoids~$\F^-$.
\end{enumerate}
\end{LEM}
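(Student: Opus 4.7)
The plan is to isolate one elementary observation that drives the entire proof: if $O$ is a consistent orientation of $R$, and $T,T'\subseteq R$ satisfy $T\le T'$ (witnessed by some $f\colon T\to T'$) with $T'\subseteq O$, then already $T\subseteq O$. To see this, pick $(A,B)\in T$; since $O$ orients every separation in $R$, either $(A,B)\in O$ or $(B,A)\in O$. In the latter case, $(A,B)\le f(A,B)\in O$ shows that $(B,A)$ and $f(A,B)$ are two elements of $O$ pointing away from each other, contradicting consistency. I shall call this the \emph{transport lemma}; it is the one place where the consistency hypothesis on $O$ does real work.

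With it in hand, the three equivalences reduce to bookkeeping. For (ii)$\Rightarrow$(i), if some $S\in\F$ lies in $O$, then $S$ is consistent as a subset of the consistent set $O$, so $S^+\in\F^*$ by definition and $S^+\subseteq S\subseteq O$. Conversely, for (i)$\Rightarrow$(ii), suppose $S^+\in\F^*$ lies in $O$, with $S\in\F$ consistent. Since $S$ is finite, each element of $S$ ascends to a maximal one, so $S\le S^+$ via that ``ascent'' map; the transport lemma then gives $S\subseteq O$, contradicting avoidance of $\F$. The implication (ii)$\Rightarrow$(iii) is trivial from $\F^-\subseteq\F^*$.

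For the remaining direction (iii)$\Rightarrow$(ii), suppose some $T\in\F^*$ lies in $O$. Because $\F^*\subseteq 2^R$ is finite and $\le$ is antisymmetric on weak stars, I can descend from $T$ along $\le$ to a $\le$-minimal $T'\in\F^*$ with $T'\le T$; by definition $T'\in\F^-$. Applying the transport lemma to $T'$ with the witnessing map into $T\subseteq O$ yields $T'\subseteq O$, contradicting avoidance of $\F^-$. The main obstacle of the proof is really just spotting the transport lemma and the exact way consistency of $O$ enters; everything else is a formal unwinding of the definitions of $\F^*$ and $\F^-$, with finiteness of $R$ invoked only to guarantee the existence of maximal elements of $S$ and of $\le$-minima of $\F^*$ below a given $T$.
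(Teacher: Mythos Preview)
Your proof is correct and is essentially the same as the paper's: the paper uses your ``transport lemma'' twice (once for $S\le S^+$ in (i)$\to$(ii), once for $S^-\le S$ in (iii)$\to$(ii)) without naming it, and handles (ii)$\to$(i) and (ii)$\to$(iii) exactly as you do. The only difference is presentational---you isolate and prove the transport principle explicitly, whereas the paper invokes it inline by appealing to consistency each time.
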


\begin{proof}
(i)$\to$(ii)$\to$(iii): Consider any set $S\in\F$. If $S^+\sub O$ then also $S\sub O$, because $O$ is a consistent orientation of $R\supe S$, and every separation in~$S$ lies below $(\le)$ some separation in~$S^+$. Hence if $O$ avoids~$\F$ it also avoids~$\F^*$, and thus also $\F^-\sub\F^*$.

(iii)$\to$(ii):%
   \COMMENT{}
   If $S^-\le S\sub O$ then $S^-\sub O$, since $O$ is a consistent orientation of $R\supe S^-$. Since for every $S\in\F^*$ there exists an $S^-\le S$ in~$\F^-$, this proves the assertion.

(ii)$\to$(i): If $O$ has a subset $S$ in~$\F$, then $S\sub O$ is consistent, so $S^+\sub S\sub O$ lies in~$\F^*$. Hence if $O$ avoids~$\F^*$, it must avoid~$\F$.
\end{proof}

\section{Obstructions to consistency}\label{sec:obstructions}

As pointed out earlier, the key advance of our Strong Duality Theorem over the weak one is that the orientations of the given separation system $S$ whose existence it claims will be consistent. This comes at a price: to obtain consistent orientations in the proof we had to impose a condition on $S$ and~$\F$, that $S$ should be `$\F$-separable'.%
   \COMMENT{}

The following simple example shows that imposing some condition was indeed necessary: even if $\F$ consists only of stars, it can happen that there is neither a consistent $\F$-avoiding orientation of~$S$ nor an $S$-tree over~$\F$ that extends some given consistent $S^-\sub S$.

\begin{EX}\label{ex:P1}\rm
Let $S$ consist of two pairs of crossing separations and their inverses: $(A_1,B_1), (A_2,B_2)$ and $(A'_1,B'_1), (A'_2,B'_2)$, such that $(A_i,B_i) < (A'_j,B'_j)$ for all choices of $\{i,j\} \subseteq \{1,2\}$. Let $S^- = \{(A_1,B_1), (B'_1,A'_1)\}$, and let $\F$ contain the stars $S_1 = \{(A_1,B_1), (B'_2,A'_2)\}$ and $S_2 = \{(A_2,B_2), (B'_1,A'_1)\}$ (Fig.~\ref{fig:P1graph}).

There exists a unique $\F$-avoiding orientation of $S$ extending~$S^-$, which contains $(A'_2,B'_2)$ to avoid $S_1$ in the presence of~$(A_1,B_1)\in S^-$, as well as $(B_2,A_2)$ to avoid $S_2$ in the presence of $(B'_1,A'_1)\in S^-$. But these two separations, $(A'_2,B'_2)$ and $(B_2,A_2)$, face away from each other, so this orientation of $S$ is inconsistent.

However, there is no $S$-tree over~$\F$ rooted in~$S^-$. Indeed, the only star in $\F$ containing $(A_1,B_1)\in S^-$ is~$S_1$, the only star in $\F$ containing $(B'_1,A'_1)\in S^-$ is~$S_2$, and there are no further stars in~$\F$. Figure~\ref{fig:P1graph} shows the forest $F$ that arises instead of an $S$-tree on the right.
\end{EX}

   \begin{figure}[htpb]
\centering
   	  \includegraphics
           {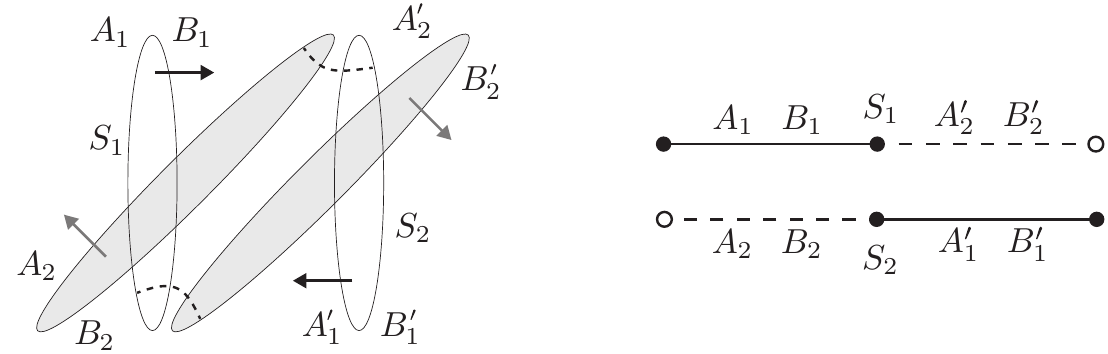}
   	  \caption{A separation system $S$ that has no consistent orientation extending~$S^-\!$, but also no $S$-tree over~$\F$ rooted in~$S^-$. Its `$S$-forest' is shown on the right.}
   \label{fig:P1graph}
   \end{figure}

However, we can repair the `$S$-forest'~$F$ by generalizing the notion of an $S$-tree, as follows. The idea of an $S$-tree is that it should witness the non-existence of certain orientations of~$S$. The non-existence of a \emph{consistent} orientation of~$S$ should be easier to witness, because it is a weaker property. And indeed, we can endow our $S$-trees with an additional feature to witness violations of consistency: nodes of degree~2 whose incoming edges map to separations that point \emph{away from} each other.%
   \footnote{This is the opposite of requiring the incoming edges at a node to map to a star of separations, which would have these point towards each other. In particular, if we allow $(T,\alpha)$ such new nodes of degree~2, then $\alpha$ will no longer preserve the natural orientation of~$\vec E(T)$.}
   As the reader may check, a tree $(T,\alpha)$ whose oriented edges map to separations in such a way that (incoming) stars at nodes either map to stars in $\F$ and or are 2-stars of this new type will still witness the non-existence of a consistent $S$-orientation avoiding~$\F$ (where, as before, leaf separations must be in $S^-$, which in turn must be included in any $S$-orientation considered): any consistent $\F$-avoiding orientation $O$ of $S$ will induce, via~$\alpha$, an orientation of $\vec E(T)$ in which no interior node of~$T$ is a sink; so there has to be a sink at a leaf, implying $O\not\supe S^-$ if $(T,\alpha)$ is rooted in~$S^-$.

   \begin{figure}[htpb]
\centering
   	  \includegraphics
             {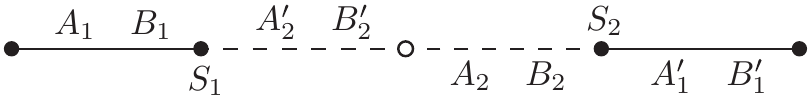}
   	  \caption{Constructing a generalized $S$-tree}
   \label{fig:P1tree}
   \end{figure}

Figure~\ref{fig:P1tree} shows such a `generalized $S$-tree'. We shall build on this idea later when we define `$S$-graphs' over arbitrary sets $\F\sub 2^S$, the objects dual to consistent $\F$-avoiding $S$-orien\-ta\-tions in our General Duality Theorem.

Alternatively, one might suspect that the failure of duality in Example~\ref{ex:P1} stems from our unhelpful choice of~$S^-$. Our next example of a separation system $S$ has no consistent orientation whatsoever avoiding a certain $\F\sub 2^S$ (again consisting of stars), none extending even $S^-\!=\emptyset$. But as there are no trees without leaves, it cannot have an $S$-tree rooted in~$\emptyset$ either, not even one generalized as above.

\begin{EX}\label{ex:P1cycle}\rm
Let $S$ consist of five inverse pairs $S_1,\dots,S_5$ of separations, arranged cyclically so that each crosses its predecessor and its successor but is nested with the other two pairs of separations, as shown in Fig.~\ref{fig:P1cycle}.%
   \COMMENT{}
   Let $S^-=\emptyset$. Let $\F$ consist of all the stars in~$S$, each consisting of two separations pointing towards each other. Then in any consistent $\F$-avoiding orientation $O$ of~$S$ every two nested separations will be comparable: they will not point towards each other because $O$ avoids~$\F$, and they will not point away from each other because $O$ is consistent. We shall prove that $S$ has no such orientation~$O$. Since $S^-=\emptyset$, it cannot have an $S$-tree either, not even one generalized as above.

For each $i=1,\dots,5$, let $\vec S_i$ be the separation from the pair~$S_i$ that lies in~$O$. Let $H$ be the graph on $\{\vec S_1,\dots,\vec S_5\}$ in which two $\vec S_i$ form an edge whenever they are nested. This is a 5-cycle; pick one of its two orientations. Since nested separations in $O$ are comparable, we have for each oriented edge $(\vec S_i, \vec S_j)$ in $H$ either $\vec S_i < \vec S_j$, in which case we colour the edge green, or $\vec S_i > \vec S_j$, in which case we colour it red. Since 5 is odd, $H$~has two equally coloured edges, so $O$ contains three adjacent separations $\vec S_i < \vec S_j < \vec S_k$. But there are no three pairwise nested separations in~$S$, a contradiction.

Figure~\ref{fig:P1cycle}, right, shows an attempted `generalized $S$-tree'~-- in fact, a 10-cycle~-- pieced together by (incoming) 2-stars mapping to~$\F$ (solid nodes) or, alternately, witnessing the inconsistency of two separations (hollow vertices).
\end{EX}

   \begin{figure}[htpb]
\centering
   	  \includegraphics
              {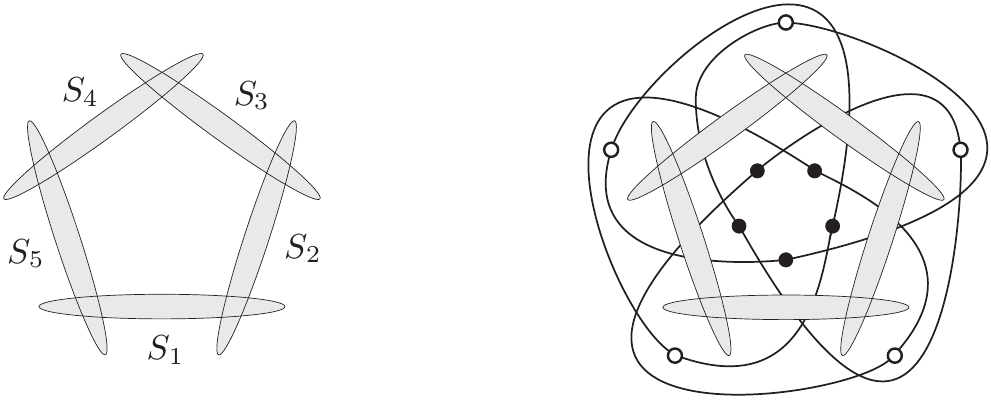}
   	  \caption{Cyclically arranged separations not defining a generalized $S$-tree}
   \label{fig:P1cycle}
   \end{figure}

Note that our proof of the non-existence of a consistent $\F$-avoiding orientation in Example~\ref{ex:P1cycle} was quite ad-hoc and perhaps not obvious. As an application of our General Duality Theorem, we shall later see a more canonical proof, illustrated by Figure~\ref{fig:5cycleSgraph}.

\section{From $\boldsymbol S$-trees to $\boldsymbol S$-graphs}\label{sec:Sgraphs}

The essence of our duality theorems is that if a given separation system $S$ has no $\F$-avoiding orientation (possibly consistent) that extends a given set $S^-\sub S$, then this is witnessed by a particularly simple subset of~$S$: a nested subsystem that already admits no $\F$-avoiding orientation extending~$S^-$. Since nested separation systems define tree-like decompositions of the structures they separate, it is convenient to describe them as $S$-trees.

As we saw in Section~\ref{sec:obstructions}, however, it can happen for certain choices of $S$ and~$\F$ that $S$ admits no consistent $\F$-avoiding orientation but every nested subsystem of $S$ does.%
   \COMMENT{}
   In such cases, it may still be possible to find a subsystem of $S$ which, though perhaps not nested, is still considerably simpler than $S$ and also admits no consistent $\F$-avoiding orientation, and which can thus be used as a witness to the fact that $S$ admits no such orientation.

It is our aim in this section to present a structure type for separation subsystems, slightly more general than nested systems, that can always achieve this. These structures are formalized as \emph{$S$-graphs}, a~generalization of $S$-trees just weak enough to describe such systems when they are not nested. Both $S$-trees and the `generalized $S$-trees' considered in Section~\ref{sec:obstructions} will be examples of $S$-graphs.

In order to illuminate the idea behind their definition, let us recall the standard proof of why any $S$-tree $(T,\alpha)$ over $\F$ rooted in~$S^-$ is an obstruction to $\F$-avoiding orientations $O\supe S^-$. Via~$\alpha$, the orientation $O$ of~$S$ orients the edges of~$T$. Since the stars at nodes of $T$ map to stars in~$\F$, which are never subsets of~$O$, the edges of $T$ at a given node are never all oriented towards it. Similarly, since $(T,\alpha)$ is rooted at~$S^-\sub O$, no edge of $T$ is oriented towards a leaf. Hence $T$ has no sink in this orientation, which cannot happen.

In this proof we did not use that $T$ is a tree except at the end, when we needed that no orientation of $T$ can leave it without a sink. And neither did we use that the separations in the image of $\alpha$ are nested. An $S$-graph, in the same spirit, will be a graph $H$ with a map from its edge orientations to~$S$ such that any consistent $\F$-avoiding orientation of $S$ will orient the edges of $H$ in a way that contradicts its structure.

The following types of graph will be used as $S$-graphs. Consider finite connected bipartite undirected graphs $H$ with at least one edge and bipartition $V(H) = N\cup M$. For every vertex $m\in M$ let its set $E(m)$ of incident edges be partitioned into two non-empty classes, as $E(m) = E'(m)\cup E''(m)$. Call every vertex of degree~1 a \emph{leaf}, and assume that all leaves lie in~$N$. Let $\H$ be the class of all such graphs.

Let $S$ be a separation system of a set~$V$, and let $S^-\sub S$ and $\F\sub 2^S$. An \emph{$S$-graph} over $\F$ rooted in~$S^-$ is a pair $(H,\alpha)$ such that $H$ is a graph in~$\H$, with bipartition $N\cup M$ say, and $\alpha\colon \vec E(H)\to S$ satisfies the following:
   \begin{enumerate}[(i)]
   \item $\alpha$ commutes with inversions of edges in $\vec E(H)$ and of separations in~$S$, i.e., $\alpha(u,v) = (A,B)$ implies $\alpha(v,u) = (B,A)$;
   \item the incident edge $nm$ of any leaf $n\in N$ satisfies $\alpha(n,m)\in S^-$;
   \item for every node $n\in N$ that is not a leaf, with incident edges $nm_1,\dots,nm_k$ say, $\{\alpha(m_1 n),\dots,\alpha(m_k n)\}\in\F$;
   \item $\alpha(n',m)\ge\alpha(m,n'')$ whenever $m\in M$ and $n'm\in E'(m)$, $mn''\in E''(m)$.
   \end{enumerate}

Note that (i)--(iii) are copied from the definition of an $S$-tree over~$\F$ rooted in~$S^-$. In particular, (iii) makes $\alpha$ map oriented stars $\{(m_1,n),\dots,(m_k n)\}$ at nodes $n\in N$ to `forbidden' sets of separations in~$\F$. Similarly, (iv) makes $\alpha$ map oriented stars $\{(n',m),(n'',m)\}$ at vertices $m\in M$ to pairs of separations that violate consistency (Fig.~\ref{fig:m}). We shall refer to (ii) by saying that $(H,\alpha)$ is \emph{rooted in~$S^-$}, to (iii) by saying that $(H,\alpha)$ is \emph{over~$\F$}, and to (iv) by saying that the edges of $H$ at vertices $m\in M$ \emph{witness inconsistencies}.

   \begin{figure}[htpb]
\centering
   	  \includegraphics{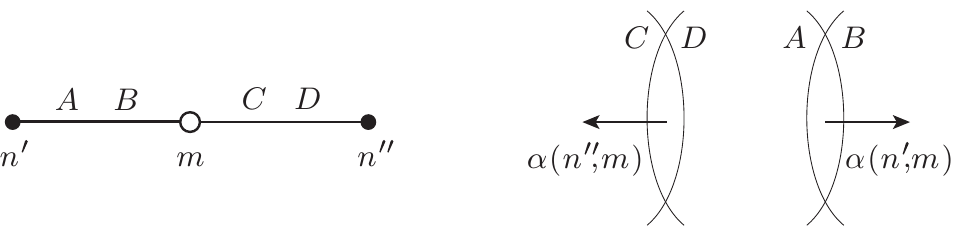}
   	  \caption{At~$m$, the map $\alpha$ witnesses the inconsistency of $(A,B) = \alpha(n',m')$ with $(D,C) = \alpha(n'',m)$, as $(A,B)\ge (C,D)$.}
   \label{fig:m}
   \end{figure}

Thus, any $S$-tree $(T,\alpha')$ over $\F$ rooted in~$S^-$ becomes an $S$-graph $(H,\alpha)$ over $\F$ rooted in~$S^-$ on subdividing every edge $n'n''$ by a vertex~$m$ and letting $\alpha(n',m) = \alpha(m,n'') = \alpha'(n',n'')$. And so do the `generalized $S$-trees' considered in Section~\ref{sec:obstructions} if we subdivide the original edges of their underlying $S$-tree.\looseness=-1%
   \footnote{Not their new edges witnessing violations of consistency, which already come in pairs sharing a vertex $m$ of degree~2 and satisfy~(iv).}

Note also that condition (iv) is invariant under swapping the names of $E'(m)$ and~$E''(m)$, since $\alpha(n',m)\ge \alpha(m,n'')$ implies $\alpha(n'',m)\ge\alpha(m,n')$ by~(i). The only purpose of partitioning $E(m)$ into these two sets is to be able to define `traversing' below; it does not matter which of the two partition sets is~$E'(m)$ and which is~$E''(m)$.

Finally, we remark that condition (ii) could be subsumed under~(iii) by putting the inverses of separations in $S^-$ in $\F$ as singleton sets and applying (iii) also to leaf nodes~$n$: since any orientation of $S$ must include every separation from $S^-$ or its inverse, forbidding the inverses of separations in~$S^-$ amounts to including~$S^-$. However, it will be convenient in the proof of our duality theorem to treat the two separately.%
   \COMMENT{}

The $S$-graphs we shall in fact need will have some further properties that make them more like trees.
Let us say that a path or cycle in a graph $H\in\H$ as above \emph{traverses} $m\in M$ if it has an edge in $E'(m)$ and another in~$E''(m)$. We shall call~$H$, and any $S$-graph $(H,\alpha)$ based on it, \emph{cusped}%
   \footnote{The word `cusped' is intended to convey a notion of `nearly as spiky as a tree': by (ii) below, any cycle in a cusped graph must have a `cusp' at a vertex $m\in M$, entering and leaving it through the same partition class of~$E(m)$.}
 if
   \begin{enumerate}[(i)]
   \item every edge $mn$ such that $n\in N$ is a leaf is the only edge in its bipartition class of~$E(m)$;
   \item no cycle in $H$ traverses all the vertices of $M$ that it contains.
   \end{enumerate}
Note that non-trivial trees become cusped graphs if we subdivide every edge.%
   \COMMENT{}

\goodbreak

We shall prove that cusped $S$-graphs over $\F$ that are rooted in~$S^-$ are obstructions to consistent $\F$-avoiding orientations extending~$S^-$.%
   \COMMENT{}
    The following property of cusped graphs is at the heart of that proof:

\begin{LEM}\label{lem:pointedsink}
Let $H$ be a cusped graph, with bipartition classes $M,N$ and $E(m) = E'(m)\cup E''(m)$ for all $m\in M$. For every orientation of its edges, $H$~either has a node $n\in N$ with all incident edges oriented towards~$n$, or it has a vertex $m\in M$ such that both $E'(m)$ and~$E''(m)$ contain an edge oriented towards~$m$.
\end{LEM}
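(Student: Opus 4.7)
My plan is to argue by contradiction. Suppose no $n\in N$ has all its incident edges directed towards it, and no $m\in M$ has incoming edges in both $E'(m)$ and $E''(m)$. I will construct a cycle $C$ in $H$ that traverses every $m\in M$ lying on it, which contradicts clause (ii) of cuspedness.

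The construction is a walk $v_0,e_1,v_1,e_2,v_2,\dots$ that cannot terminate, whence finiteness yields a repeated vertex and hence a cycle. Start at any $v_0\in N$ and extend as follows. At any vertex $v_k\in N$, the non-sink hypothesis yields an outgoing edge $e_{k+1}$; note that if $v_k$ is a leaf, its unique incident edge must then point out of $v_k$, so once $k\ge 1$ the walk cannot land on a leaf (its only edge would be the incoming $e_k$). At any $v_k\in M$ reached via $e_k$, say $e_k\in E'(v_k)$, the $M$-hypothesis forces $E''(v_k)$ to consist entirely of edges outgoing from $v_k$; since $E''(v_k)$ is non-empty, pick any such edge as $e_{k+1}$.

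Because $H$ is finite, let $j$ be the smallest index such that $v_j=v_i$ for some $i<j$, and set $C:=v_iv_{i+1}\dots v_j$. Bipartiteness together with $v_0\in N$ makes $i,j$ of the same parity, so $j-i\ge 2$; and the walk rule at $M$-vertices (opposite classes for consecutive edges) precludes immediate backtracking, giving $j-i\ge 4$ and making $C$ a genuine cycle of $H$. Every intermediate $m=v_k\in M$ (with $i<k<j$) is traversed by $C$, since by construction $e_k$ and $e_{k+1}$ lie in opposite classes of $E(m)$. For the closing vertex, note that $v_i=v_j\in M$ forces $i\ge 1$ (as $v_0\in N$); then $e_i$ and $e_j$ are both incoming to $m=v_i$, so by the $M$-hypothesis they share a class of $E(m)$, and since $e_{i+1}$ was chosen in the class opposite to $e_i$, it is also opposite to $e_j$. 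Hence $C$ traverses $m$ at the closure as well.

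I expect the main obstacle to be precisely this treatment of the closing vertex $v_i=v_j$ when it happens to lie in $M$: this is the unique step where the two hypotheses genuinely combine, the $M$-hypothesis collapsing the two arrivals $e_i,e_j$ into one class while the walk-construction rule forces $e_{i+1}$ into the other. Anchoring the start of the walk in $N$ (so that $i=0$ cannot produce an $M$-closure) is what makes this clean; everything else reduces to the standard paradigm that a walk in a finite graph which never stops must close into a cycle.
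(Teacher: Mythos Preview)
Your argument is correct and follows essentially the same approach as the paper: both build a forward-oriented walk that traverses each $M$-vertex it meets, and then exploit cuspedness once a cycle is forced; the paper phrases this directly via a maximal such path and a case analysis on its endpoint, while you phrase it as a contradiction argument using the first repeated vertex of an infinite walk, but the key step at the closing $M$-vertex (two incoming edges must share a class, so the outgoing successor lies in the opposite class) is the same in both. One small point: your exclusion of $j-i=2$ invokes only the $M$-rule, but when $v_{i+1}\in N$ you should instead note that $e_{i+1}$ is incoming and $e_{i+2}$ outgoing at $v_{i+1}$, so they cannot coincide---this is implicit in your forward-orientation setup and does not affect correctness.
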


\begin{proof}
In a given orientation of~$H$, let $Q$ be a maximal forward-oriented path that starts at a node in~$N$ and traverses every $m\in M$ it contains unless it ends there.

Suppose first that $Q$ ends at a vertex $m\in M$. Then $Q$ has an edge in only one of the two partition classes of~$E(m)$. If all the edges in the other partition class are oriented towards~$m$, then $m$ has the desired property, because that other partition class is also non-empty (by definition of~$\H$). If not, the other partition class contains an edge $mn$ oriented towards~$n$ and with $n\in Q$. Adding this edge to the final segment $nQ$ of~$Q$ we obtain a cycle in $H$ that traverses all its vertices in~$m$, a contradiction.

Suppose now that $Q$ ends at a node $n\in N$. If all the edges of $H$ at $n$ are oriented towards~$n$, then $n$ is as desired. If not, there is an edge $nm$ oriented away from~$n$. By the maximality of $Q$ we cannot append this edge to~$Q$, so $Q$ traverses~$m$. Since the cycle obtained by adding the edge $nm$ to the final segment $mQ$ of $Q$ does not traverse all its vertices in~$M$,%
   \COMMENT{}
   the first edge of $mQ$ lies in the same partition class as~$nm$. Then the edge preceding $m$ on~$Q$ and the edge $nm$ are both oriented towards~$m$ and lie in different partition classes of~$E(m)$, as desired.
\end{proof}

We remark that the converse of Lemma~\ref{lem:pointedsink} can fail: the graph shown in Figure~\ref{fig:pointedsinkconverse}, in which the two hollow vertices are in $M$ and their incident edges are partitioned into `left' and `right', satisfies the conclusion of the lemma but contains a cycle that traverses all its vertices in~$M$.

  \begin{figure}[htpb]
\centering
   	  \includegraphics
           {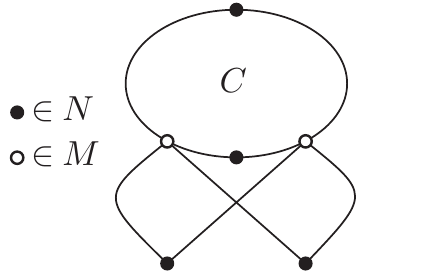}
   	  \caption{A counterexample to the converse of Lemma~\ref{lem:pointedsink}}
   \label{fig:pointedsinkconverse}
   \end{figure}

Although we shall prove that any cusped $S$-graph can be used as a witness to the non-existence of the corresponding orientations of~$S$, we shall also prove that there are always witnesses among these that can be constructed in a particularly simple way:\vadjust{\penalty-200} recursively from subdivided $S$-trees by amalgamations in a single vertex. Let us call a graph $H$ in $\H$ \emph{constructible} if either
   \begin{enumerate}[(P1)]
   \item $H$ is obtained from a $k$-star on~$N$ with $k\ge 2$ by subdividing every edge once and putting the subdividing vertices in~$M$;%
      \COMMENT{}
      or
   \item $H$ is obtained from the disjoint union of two constructible graphs $H',H''$ as follows. Let $V(H') = N'\cup M'$ and $V(H'') = N''\cup M''$ with the familiar notation. Let $L'$ be a non-empty set of leaves of~$H'$ such that, for every $n'\in L'$, its incident edge~$n'm'$ is the unique edge in its partition class of~$E(m')$, and let $L''$ be an analogous set of leaves in~$H''$. Let $H$ be obtained from $(H'-L')\cup (H''-L'')$ by identifying all the neighbours in~$H'$ of nodes in~$L'$ with all the neighbours in~$H''$ of nodes in~$L''$ into a new vertex~$m$ (Fig.~\ref{fig:contructible}).%
   \COMMENT{}
   Let $M$ be the set of vertices of $H$ that are in $M'\cup M''$ or equal to~$m$, and let $N$ be the set of all other vertices of~$H$ (those in $N'\cup N''$). Let $E'(m)$ consist of the edges at~$m$ that come from~$H'$, and let $E''(m)$  consist of the edges coming from~$H''$.
   \end{enumerate}

  \begin{figure}[htpb]
\centering
   	  \includegraphics
            {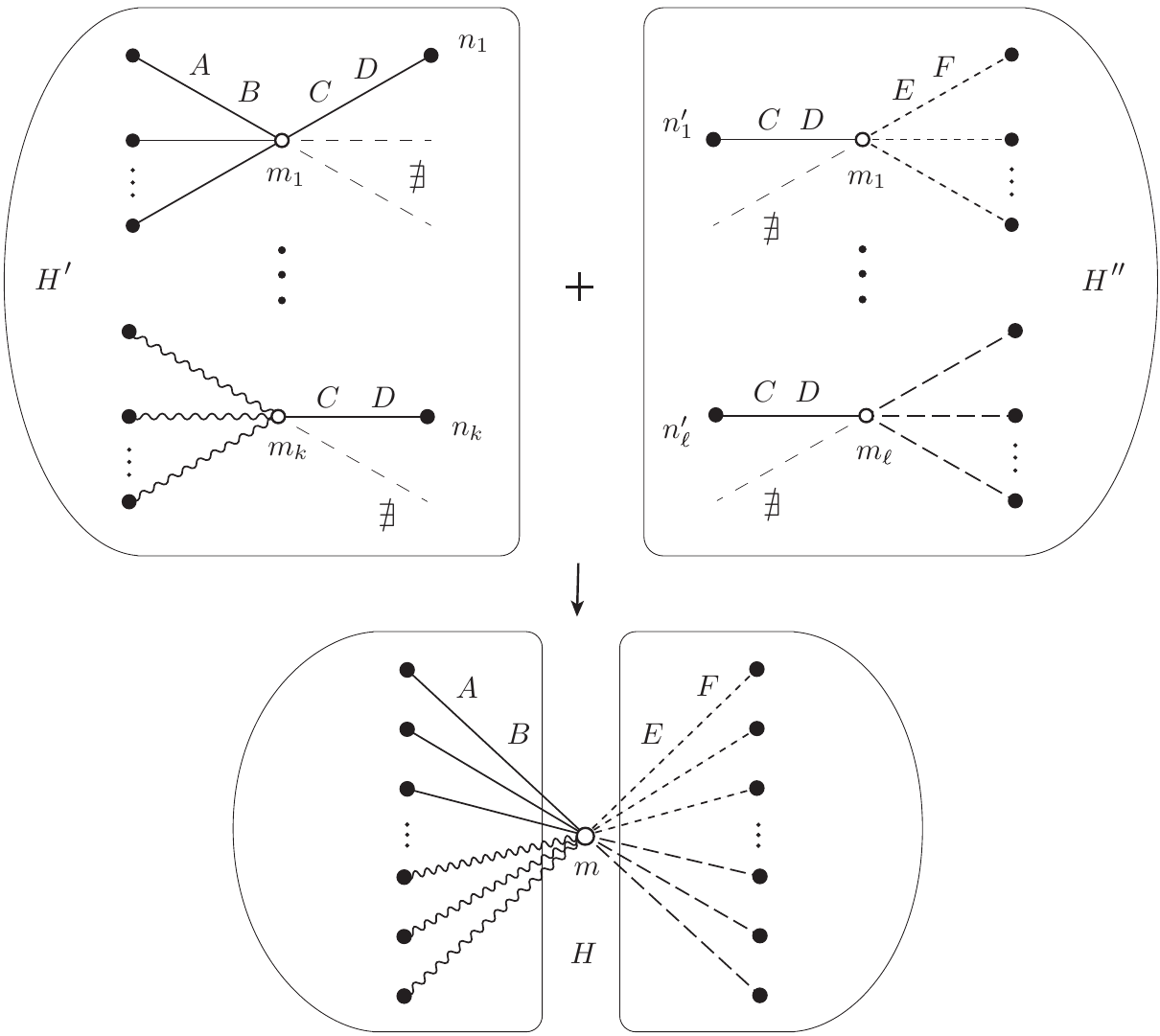}
   	  \caption{The recursion for constructible graphs}
   \label{fig:contructible}
   \end{figure}

\begin{LEM}\label{lem:constructiblecusped}
Constructible graphs are cusped.
\end{LEM}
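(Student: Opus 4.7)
The plan is to induct on the number of construction steps used to build $H$ via (P1) and (P2), verifying the two cuspedness properties (i) and (ii) at each step. The base case (P1) is immediate: a subdivided $k$-star with $k \ge 2$ is a tree, so (ii) holds vacuously; and at each subdividing vertex $m_i$ the set $E(m_i) = \{cm_i,\, m_i\ell_i\}$ splits into two singleton partition classes, so the unique leaf $\ell_i$ adjacent to $m_i$ has its edge alone in its class, giving (i).

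For the inductive step I would assume $H'$ and $H''$ are cusped and turn to the amalgamated graph $H$ with new vertex $m$. Property (ii) is the easier half: since $H', H''$ are connected and $L', L''$ consist of leaves (whose removal preserves connectivity), $m$ becomes a cut vertex of $H$ separating the $H'$-side from the $H''$-side. Hence any cycle $C$ of $H$ either lies entirely in one side---where it is a cycle of $H'$ or of $H''$ and fails to traverse all its $M$-vertices by induction---or $C$ uses $m$ while staying on one side, in which case both of its edges at $m$ come from the same side and lie in the same partition class $E'(m)$ or $E''(m)$; then $C$ does not traverse $m$ and hence does not traverse all its $M$-vertices.

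Property (i) at a vertex $m_0 \in M \setminus \{m\}$ is inherited from the ambient cusped graph: the edges at $m_0$ and their bipartition are unchanged, and adopting the multigraph convention so that the amalgamation preserves degrees, a leaf of $H$ incident to $m_0$ is a leaf of $H'$ (or $H''$), so (i) transfers via the inductive hypothesis.

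The hard part will be (i) at the new vertex $m$. Suppose some leaf $n \in N' \setminus L'$ of $H$ is adjacent to $m$; then $n$ is a leaf of $H'$ whose unique neighbour $\mu$ in $H'$ lies in $\mu(L')$, and since $\mu \in \mu(L')$ there is also some $n' \in L'$ adjacent to $\mu$, with $n' \neq n$. Applying (i) of $H'$ to both leaves $n, n'$ at $\mu$ forces each of $n\mu,\, n'\mu$ to be alone in its partition class of $E(\mu)$; being distinct, they must lie in different classes, giving $E(\mu) = \{n\mu,\, n'\mu\}$ and $\deg_{H'}(\mu) = 2$. The key observation is then that the connected component of $\mu$ in $H'$ is exactly the $3$-vertex path $n-\mu-n'$, so by connectedness of $H'$ we get $V(H') = \{n,\mu,n'\}$ and $M' = \mu(L') = \{\mu\}$. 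Consequently the only non-$L'$ edge out of $\mu(L')$ in $H'$ is $n\mu$ itself, so $E'(m) = \{nm\}$ is a singleton and $nm$ is alone in its partition class of $E(m)$ after all, completing (i) at $m$.
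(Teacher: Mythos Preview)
Your proof is correct and follows essentially the same route as the paper's: induction along the recursive definition, with the cutvertex observation handling property~(ii) in the inductive step, and the ``$H'$ collapses to a $3$-path'' argument handling property~(i) at the new amalgamation vertex~$m$. The only cosmetic difference is that for the leaf $n'\in L'$ you invoke the inductive hypothesis (i) of $H'$ to conclude that $n'\mu$ is alone in its class, whereas the paper reads this off directly from the hypothesis in~(P2); the conclusions coincide, and your explicit treatment of the case $m_0\ne m$ (which the paper leaves implicit) and your remark about the multigraph convention are harmless additions.
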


\begin{proof}
We apply induction following their recursive definition. Since subdivided trees are cusped, the induction starts. Now let $H$ be obtained from two cusped graphs $H', H''$ as in~(P2). Since $m$ is a cutvertex of~$H$ dividing $E'(m)$ and $E''(m)$ into different blocks, no cycle of $H$ through $m$ traverses~$m$. Hence $H$ inherits property (ii) from the definition of `cusped' from the graphs~$H',H''$.

To check property~(i), consider an edge $mn$ of $H$ such that $n$ is a leaf of~$H$.%
   \COMMENT{}
   Assume that $n\in H'$. Then $n$ is a leaf also in~$H'$. Then the partition class of its incident edge $nm'$ in~$H'$, say $E'(m')$, contains only~$nm'$. But by construction of~$H$, the other partition class of edges at~$m$ in~$H'$ also contains only one edge $n'm'$, with $n'$ a leaf of~$H'$. Since $H'$ is connected, this means that $H'$ is just the 2-path $n'm'n$. Hence $L' = \{n'\}$, so $mn$ is the only edge in its partition class also in~$H$.
\end{proof}

We remark that while many cusped graphs are constructible~\cite{pointed}, not all are. For example, a vertex $m\in M$ in a constructible graph will never separate the other ends of two of its incident edges from the same partition class of~$E(m)$, but this can happen in an arbitrary cusped graph such as a tree.%
   \COMMENT{}

\medbreak

We can now construct $S$-graphs in the same way. Let $(H,\alpha)$ be an $S$-graph over $\F\sub 2^S$,%
   \COMMENT{}
   with $V(H) = N\cup M$ and partitions of the sets $E(m)$ as earlier. We say that $(H,\alpha)$ is \emph{constructible} (over~$\F$) if either
   \begin{enumerate}[(S1)]
   \item $(H,\alpha)$ is obtained from an $S$-tree $(T,\alpha')$ over~$\F$ with $T$ a $k$-star on~$N$ ($k\ge 2$) by subdividing every edge once, putting the subdividing vertices in~$M$, and letting $\alpha(n',m) = \alpha(m,n'') = \alpha'(n',n'')$ whenever $m\in M$ subdivides the edge $n'n''\in T$; or
   \item $H$ is obtained as in~(P2) from the disjoint union of two graphs $H',H''$ in $S$-graphs $(H',\alpha')$ and $(H'',\alpha'')$ constructible over~$\F$, in such a way that there exists a separation $(C,D)\in S$ such that
   \begin{itemize}
     \item $\alpha'(m',n') = (C,D) = \alpha''(n'',m'')$ for every $n'\in L'$ with incident edge $n'm'\in H'$ and every $n''\in L''$ with incident edge $m''n''\in H''$;
      \item neither $(C,D)$ nor $(D,C)$ is a leaf separation%
  \footnote{These are separations $\alpha(n,m)$ with $n$ a leaf~\cite{DiestelOumDualityI}.}
       in~$(H,\alpha)$;%
   \COMMENT{}
      \item $\alpha = (\alpha'\!\restriction \vec E(H'-L')) \cup (\alpha''\!\restriction\vec E(H''-L''))$
   \end{itemize}
   \end{enumerate}
 (see Figure~\ref{fig:contructible}). Any $S$-graph $(H,\alpha)$ arising as in~(S2) will be said to have been obtained from $(H',\alpha')$ and $(H'',\alpha'')$ by \emph{amalgamating $L'$ with~$L''$}.

Thus in~(S2), $H$ is obtained from $H'$ and~$H''$ by identifying, for some $(C,D)\in S$, all the leaves of $H'$ with leaf separation $(D,C)$ with all the leaves of $H''$ with leaf separation~$(C,D)$ and contracting all the edges at the identified node into one new amalgamation vertex~$m$.

Checking that $(H,\alpha)$ as obtained in~(S2) is again an $S$-graph is straightforward: conditions (i)--(iii) from the definition of $S$-graphs carry over from $H'$ and~$H''$, while (iv) holds because whenever $\tilde n'm\in E'(m)$ and $m\tilde n''\in E''(m)$ there are $n'\in L'$ and $n''\in L''$ such that
 $$\alpha(\tilde n',m) = \alpha'(\tilde n',m')\ge\alpha'(m',n') = (C,D) = \alpha''(n'',m'')\ge\alpha'' (m'',\tilde n'') = \alpha(m,\tilde n''),$$
where the inequalities hold by (iv) for $(H',\alpha')$ and~$(H''\alpha'')$. (In Figure~\ref{fig:contructible}, for example, we have $(A,B)\ge (E,F)$ in~$(H,\alpha)$, because $(A,B)\ge (C,D)$ in~$(H',\alpha')$ and $(C,D)\ge (E,F)$ in~$(H'',\alpha'')$.)

\section{The General Duality Theorem}\label{sec:Thm}

We can now state and prove the most general version of our duality theorem. It looks for consistent orientations and allows in~$\F$ arbitrary subsets (equivalently by Lemma~\ref{lem:stars}, weak stars) rather than just stars. On the dual side it offers only $S$-graphs rather than $S$-trees as witnesses when such an orientation does not exist,%
   \COMMENT{}
   but we can choose whether we want to use constructible or arbitrary cusped $S$-graphs.%
   \COMMENT{}

\begin{THM}\label{thm:gen}%
   \COMMENT{}
  Let ${S}$ be a finite separation system.  Let ${S}^-\sub S$%
  \COMMENT{}
   and $\F\sub 2^S$. Then the following assertions are equivalent:\vskip-3pt\vskip0pt
\begin{enumerate}[\rm(i)]\itemsep0pt
  \item There exists a consistent $\F$-avoiding orientation of ${S}$ extending ${S}^-$.
  \item There is no cusped ${S}$-graph over $\F$ rooted in ${S}^-$.
  \item There is no ${S}$-graph over $\F$ and rooted in~$S^-$ that is constructible over~$\F$.
  \end{enumerate}
\end{THM}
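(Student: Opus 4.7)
The plan is to prove the three implications (i) $\Rightarrow$ (ii) $\Rightarrow$ (iii) $\Rightarrow$ (i). For (i) $\Rightarrow$ (ii), I take a consistent $\F$-avoiding orientation $O\supseteq S^-$ and a purported cusped $S$-graph $(H,\alpha)$ over $\F$ rooted in $S^-$, and orient each edge $uv\in E(H)$ from $u$ to $v$ whenever $\alpha(u,v)\in O$. Lemma~\ref{lem:pointedsink} then delivers either a sink $n\in N$---which is impossible, since for a leaf $n$ the label $\alpha(n,m)\in S^-\sub O$ forces the edge out of $n$, and for an internal $n$ the star $\{\alpha(m_i,n)\}_i\in\F$ cannot be contained in~$O$, so some edge is oriented away from $n$---or a vertex $m\in M$ with an incoming edge from each of its two partition classes, which is also impossible, because the defining inequality $\alpha(n',m)\ge\alpha(m,n'')$ then places two separations in $O$ pointing away from each other, violating consistency. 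The implication (ii) $\Rightarrow$ (iii) is immediate from Lemma~\ref{lem:constructiblecusped}.

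The principal step is (iii) $\Rightarrow$ (i), proved contrapositively by induction on the number of pairs $\{r,-r\}\sub S$ with $r,-r\notin S^-$. Pick such a pair and set $S_1^-:=S^-\cup\{r\}$ and $S_2^-:=S^-\cup\{-r\}$. Since any orientation of $S$ extending $S^-$ contains $r$ or $-r$, the assumed failure of~(i) for $S^-$ propagates to both $S_i^-$, and the induction hypothesis yields constructible $S$-graphs $(H_i,\alpha_i)$ over $\F$ rooted in $S_i^-$. If some $H_i$ uses none of its leaves for $\pm r$, then $(H_i,\alpha_i)$ is already rooted in $S^-$ and we are done. Otherwise let $L_i$ collect the leaves of $H_i$ labelled $r$ (for $i=1$) or $-r$ (for $i=2$). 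By Lemma~\ref{lem:constructiblecusped} both $H_i$ are cusped, so each such leaf's incident edge is alone in its partition class at the adjacent $M$-vertex, meeting the hypothesis of~(S2). Amalgamating along the separation $-r$ (with $(C,D)=-r$) yields a constructible pair $(H,\alpha)$ rooted in $S^-$: the surviving leaf labels come from $S_i^-\sm\{\pm r\}\sub S^-$, and neither $r$ nor $-r$ becomes a leaf separation of $H$.

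The base case, where every inverse pair in $S$ meets $S^-$, requires a direct construction. Then the only orientation extending $S^-$ is $S^-$ itself, so failure of~(i) means either $S^-$ contains some $T\in\F$---handled by a subdivided $|T|$-star via~(S1), allowing repeated leaf labels when $|T|=1$---or $S^-$ is itself inconsistent, containing $r,s$ with $r\ge-s$. The second case is the main anticipated obstacle: the primitive three-vertex ``cusp'' that obviously witnesses this inconsistency does not literally fit the~(P1) template of subdivided $k$-stars with $k\ge 2$, so the construction must either be absorbed into the inductive step one level up---by amalgamating the cusp with an already-constructible $S$-graph supplied by the induction---or be realised by adjoining auxiliary (S1)-constructible pieces whose (S2)-amalgamation with the cusp produces a valid constructible $S$-graph over $\F$ rooted in $S^-$.
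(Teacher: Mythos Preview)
Your argument is essentially the paper's, step for step: the same use of Lemma~\ref{lem:pointedsink} for (i)$\Rightarrow$(ii), Lemma~\ref{lem:constructiblecusped} for (ii)$\Rightarrow$(iii), and the same induction on the number of undecided inverse pairs for (iii)$\Rightarrow$(i), amalgamating via~(S2) the two $S$-graphs obtained for $S^-\cup\{r\}$ and $S^-\cup\{-r\}$.

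The only real difference is organisational, and it is where your proposal becomes vague. The paper does \emph{not} fold the inconsistency of $S^-$ into the induction's base case. It disposes of it first, before the induction even starts: if $S^-$ contains $(A,B),(C,D)$ with $(D,C)\le(A,B)$, one simply takes the three-vertex path $H=nmn'$ with $\alpha(n,m)=(A,B)$, $\alpha(n',m)=(C,D)$ and declares this to be the constructible $S$-graph required by~(iii). The same path is then invoked (``or trivially'') inside the induction step whenever one of $S^-_X,S^-_Y$ happens to be inconsistent. Your anticipated obstacle---that this path does not match~(S1) with $k\ge 2$---is a fair reading of the letter of the definition, but the paper's proof makes no attempt to realise the path by amalgamating auxiliary pieces or by pushing it ``one level up''; it treats the path itself as the base object. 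Your two suggested workarounds would not work anyway: there need be no ``level up'' if the original $S^-$ is already inconsistent, and an (S2)-amalgamation with auxiliary (S1)-pieces would drag in elements of~$\F$ that have nothing to do with the inconsistency. So drop the hedging: handle the inconsistent case exactly as the paper does, by exhibiting the path $nmn'$ directly, and then run the induction only over consistent~$S^-$.
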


\noindent
By Lemma~\ref{lem:stars}, we may replace in (i) the set $\F$ with the set%
   \COMMENT{}
   $\F^*$ of weak stars or the set $\F^-$ of minimal weak stars in~$\F^*$ to obtain an equivalent assertion; we may then leave (ii) and (iii) unchanged or change $\F$ there as well, as we wish.

By the same argument, it will not be possible to narrow the class of $S$-graphs allowed in (ii) and~(iii) to any inequivalent%
   \COMMENT{}
   subclass just by restricting $\F$ in this way.%
   \COMMENT{}
  For example, since Theorem~\ref{thm:gen} fails when we replace `$S$-graph' with `$S$-tree' (Example~\ref{ex:P1cycle}), it will still fail with `$S$-tree' when we restrict $\F$ to $\F^*$ or~$\F^-$.\looseness=-1

\begin{proof}[Proof of Theorem~\ref{thm:gen}]
(i)$\to$(ii) Let $O\supe S^-$ be a consistent $\F$-avoiding orientation of~$S$, and suppose there is an $S$-graph $(H,\alpha)$ over~$\F$ rooted in~$S^-$. Let $H$ be given with partitions $V(H) = N\cup M$ and $E(m) = E'(m)\cup E''(m)$ at vertices $m\in M$, as in the definiton of cusped graphs. Now consider the orientation $\vec E(H)$ of $H$ that $O$ induces via~$\alpha$: orient an edge $uv\in E(H)$ from $u$ to~$v$ if $\alpha(u,v)\in O$, and from $v$ to~$u$ if $\alpha(u,v)\notin O$ (and hence $\alpha(v,u)\in O$).%
   \COMMENT{}

   The fact that $O$ extends~$S^-$ while $(H,\alpha)$ is rooted in~$S^-$, the fact that $O$ avoids~$\F$ while $(H,\alpha)$ is over~$\F$,  and the fact that $O$ is consistent while the edges of $H$ at vertices $m\in M$ can witness inconsistency, then imply the following:
\vskip-3pt\vskip0pt
\begin{itemize}\itemsep=0pt
   \item at every node $n\in N$ at least one incident edge is oriented away from~$n$;
   \item at every vertex $m\in M$ either all edges in $E'(m)$ or all edges in $E''(m)$ are oriented away from~$m$.
\end{itemize}
\vskip-3pt\vskip0pt
   This contradicts Lemma~\ref{lem:pointedsink}.

\goodbreak

(ii)$\to$(iii) follows from Lemma~\ref{lem:constructiblecusped}.

(iii)$\to$(i) Suppose first that $S^-$ contains separations $(A,B), (C,D)$%
   \COMMENT{}
  witnessing inconsistency, with $(D,C)\le (A,B)$ say. Let $H$ be a path $nmn'$, and put $N = \{n,n'\}$ and $M=\{m\}$. Let $\alpha(n,m) = (A,B)$ and $\alpha(n',m) = (C,D)$, as well as $\alpha(m,n) = (B,A)$ and $\alpha(m,n') = (D,C)$. Then $(H,\alpha)$ is an $S$-graph as in~(iii),%
   \COMMENT{}
   completing the proof.

We now assume that $S^-$ contains no such $(A,B), (C,D)$. Then $S^-$ is a consistent partial orientation%
   \footnote{A \emph{partial orientation} of $S$ is an orientation of a symmetric subset of~$S$~\cite{DiestelOumDualityI}.}
   of~$S$. We apply induction on $\abs{{S}}-2\abs{{S}^-}$.

  If $\abs{{S}}=2\abs{{S}^-}$, then ${S}^-$ itself is an orientation of~${S}$ extending ${S}^-$. If (i) fails, then $S^-$ has a subset $\{(A_1,B_1),\ldots,(A_n,B_n)\}\in\F$.%
   \COMMENT{}
   Let $N$ be the vertex set of an $n$-star with centre~$t$ and leaves $s_1,\ldots,s_n$. Subdivide every edge $s_i t$ by a new vertex~$s'_i$, and put these in~$M$. Let $\alpha(s_i,s'_i) = \alpha(s'_i,t) = (A_i,B_i)$ and $\alpha(t,s'_i) = \alpha(s'_i,s_i) = (B_i,A_i)$, for $i = 1,\ldots,n$. Then $(H,\alpha)$ is an $S$-graph as in~(iii), completing the proof.%
   \COMMENT{}

  Thus we may assume that ${S}$ has a separation $(X,Y)$ such that neither $(X,Y)$ nor $(Y,X)$ is in ${S}^-$.%
  \COMMENT{}
   Let ${S}_X^-={S}^-\cup\{(Y,X)\}$ and ${S}_Y^-={S}^-\cup\{(X,Y)\}$. Since any orientation of~$S$ extending ${S}_X^-$ or~$S_Y^-$ also extends ${S}^-$, we may assume that there is no such orientation that is both consistent and avoids~$\F$.

By the induction hypothesis, or trivially%
   \COMMENT{}
   if $S_X^-$ or $S_Y^-$ is inconsistent, there are constructible ${S}$-graphs $(H_X,\xi)$ and $(H_Y,\upsilon)$ over~$\F$, rooted in ${S}_X^-$ and $S_Y^-$, respectively. Unless one of these is in fact rooted in~${S}^-$, contradicting~(iii), $H_X$~has at least one leaf~$x$, with neighbour~$x'$ say, such that $\xi(x,x') = (Y,X)$, and $H_Y$ has at least one leaf~$y$, with neighbour~$y'$ say, such that $\upsilon(y,y') = (X,Y)$.\looseness=-1

Let $L_X$ be the set of all these leaves $x$ of~$H_X$, and let $L_Y$ be the set of all these leaves~$y$ of~$H_Y$. It is easily verified that the $S$-graph obtained from $(H_X,\xi)$ and $(H_Y,\upsilon)$ by amalgamating $L_X$ with $L_Y$ is as in~(iii).
   \end{proof}

  \begin{figure}[htpb]
\centering
   	  \includegraphics
              {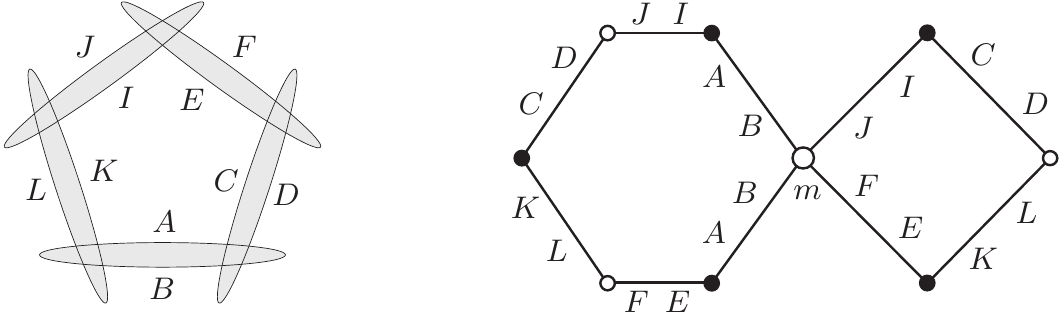}
   	  \caption{An $S$-graph corresponding to a cyclic arrangement of separations}
   \label{fig:5cycleSgraph}
   \end{figure}

Figure~\ref{fig:5cycleSgraph} shows an $S$-graph witnessing the non-existence of a consistent orientation avoiding~$\F$ in Example~\ref{ex:P1cycle} (Fig.~\ref{fig:P1cycle}). It was found as the proof of Theorem~\ref{thm:gen} would suggest: not knowing what to do with $S^-=\emptyset$ we tentatively considered $S_A^- = \{(B,A)\}$ and $S_B^- = \{(A,B)\}$, found $S$-graphs over $\F$ rooted in $S_A^-$ and~$S_B^-$, respectively (in fact, `generalized $S$-paths' as in Section~\ref{sec:obstructions}; Fig.~\ref{fig:Spaths}), and pieced these together to form the rootless $S$-graph shown in Figure~\ref{fig:5cycleSgraph}.

   \begin{figure}[htpb]
\centering
   	  \includegraphics
              {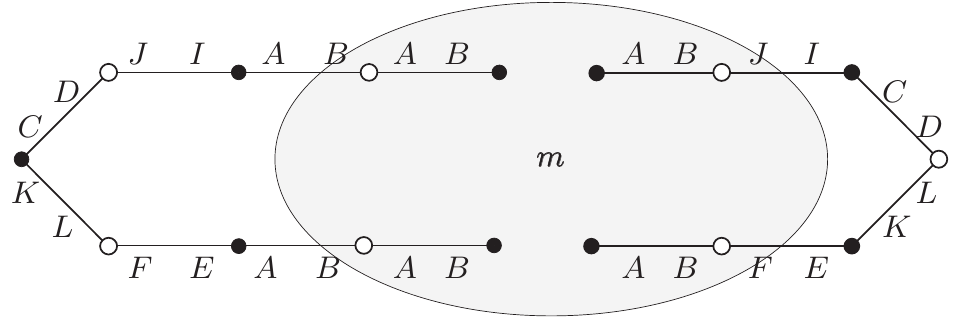}
   	  \caption{The $S$-paths rooted in $S_A^- = \{(B,A)\}$ and $S_B^- = \{(A,B)\}$}
   \label{fig:Spaths}
   \end{figure}

Notice that while it is easy to use this $S$-graph, once found, to prove that $S$ has no consistent $\F$-avoiding orientation, it was not so easy to find it. In general, finding either a consistent $\F$-avoiding orientation of~$S$ or a cusped $S$-graph over~$\F$ is a hard problem: as Bowler~\cite{B14KkandNPC} observed, the problem to decide whether a set $S$ of separations in a graph $G$ has a consistent orientation that avoids a given $\F\sub 2^S$ is NP-hard even when $S$ is fixed as~$S_k$ and the sets in $\F$ consist of only three separations (and $G$ and $\F$ are input).%
   \COMMENT{}

\section{Applications: blocks and profiles}\label{sec:apps}

One of the motivations for this paper was to find a duality theorem for two notions of `dense objects' that received some attention recently, \emph{blocks} and \emph{profiles}.\looseness=-1 

A~\emph{$k$-block} in a graph $G$, where $k$ is any positive integer, is a maximal set $X$ of at least~$k$ vertices such that no two vertices $x,x'\in X$ can be separated in~$G$ by fewer than~$k$ vertices other than $x$ and~$x'$. A~\emph{$k$-profile} of $G$ is a consistent orientation $P$ of the set $S_k$ of separations of order $<k$ in $G$ such that whenever $(A,B),(C,D)\in P$ and $(A\cup C, B\cap D)\in S_k$ then $(A\cup C, B\cap D)\in P$. Every $k$-block induces a $k$-profile (see below on how); tangles of order~$k$ are other examples of $k$-profiles. Every $k$-profile, in turn, is a haven of order~$k$ and thus gives rise to a bramble of order at least~$k$. Thus, profiles lie between blocks and brambles, but also between tangles and havens. See~\cite{confing, ForcingBlocks, profiles, CDHH13CanonicalAlg, CDHH13CanonicalParts} for more on blocks and profiles.

Before we look at blocks and profiles separately, let us note that any consistent orientation $O$ of~$S_k$ extends
 $$ S_k^- := \big\{(A,V) : |A| < k\big\}\sub S_k\,,$$
since $(V,A)\in O$ would violate consistency by $(A,V)\le (V,A)\in O$.%
   \COMMENT{}

Let us look at blocks first. Consider a $k$-block $b$ in a graph~$G$. For every separation $(A,B)\in S_k$ we have $b\sub A$ or $b\sub B$, but not both (since $|b|\ge k$). Thus,
 $$O_k(b) := \{\,(A,B)\in S_k : b \sub B\,\}$$
 is a consistent orientation of~$S_k$. Clearly, $O_k(b)$~has no subset in 
 \begin{equation}\label{eq_Bk}
\textstyle \B_k := \big\{\,\{ (A_i,B_i)\mid i=1,\dots,n\}\sub S_k : \big|\bigcap_{i=1}^n B_i\big| < k\,\big\},
\end{equation}
 so for $\F = \B_k$ it is an $\F$-avoiding consistent orientation of~$S_k$.

Conversely, every $\B_k$-avoiding orientation $O$ of~$S_k$ is clearly consistent,%
   \COMMENT{}
   and
 $$\textstyle b_k(O) := \bigcap \{\,B : (A,B)\in O\,\}$$
 is a $k$-block: no separation in $S_k$ separates any of its vertices, since that separation or its inverse lies in~$O$ and hence has a side containing~$b_k(O)$;%
   \COMMENT{}
   and since $O\notin\B_k$ because $O$ avoids~$\B_k$, we have $|b_k (O)|\ge k$ by~\eqref{eq_Bk}.

We can thus obtain orientations of $S_k$ from $k$-blocks, and vice versa. These operations are inverse to each other:

\begin{LEM}\label{lem:blocks}
$O = O_k(b)$ if and only if $b = b_k(O)$.
\end{LEM}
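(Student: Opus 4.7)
The plan is to exploit a single key observation about $k$-blocks: for every separation $(A,B)\in S_k$, the $k$-block $b$ satisfies either $b\sub A$ or $b\sub B$. Indeed, if $b$ met both $A\sm B$ and $B\sm A$, two of its vertices would be separated in $G$ by $A\cap B$, a set of fewer than $k$ vertices disjoint from both of them, contradicting the $k$-block property since $|b|\ge k$. Both directions of the lemma will follow from this together with the maximality of $b$.

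For the forward direction, assume $O = O_k(b)$; I would first note that $b\sub b_k(O)$ is immediate, since $b\sub B$ for every $(A,B)\in O_k(b) = O$. For the reverse inclusion, suppose for contradiction that some $v\in b_k(O)$ lies outside~$b$. Then $|b\cup\{v\}|>k$, so by maximality of $b$ there must be some $x\in b$ and a set $X$ of fewer than $k$ vertices, disjoint from both $v$ and~$x$, separating them in~$G$. This yields a separation $(A,B)\in S_k$ with $v\in A\sm B$ and $x\in B\sm A$; by the observation above, $b\sub B$, so $(A,B)\in O_k(b)=O$. But then $v\notin B$ contradicts $v\in b_k(O)= \bigcap\{B':(A',B')\in O\}$.

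For the reverse direction, assume $b = b_k(O)$. The inclusion $O\sub O_k(b)$ is immediate from the definition of $b_k(O)$: any $(A,B)\in O$ has $b=b_k(O)\sub B$. For the opposite inclusion, take any $(A,B)\in O_k(b)$, so that $b\sub B$. Since $O$ is an orientation of~$S_k$, either $(A,B)\in O$ or $(B,A)\in O$; in the latter case $b=b_k(O)\sub A$ too, giving $b\sub A\cap B$, which is impossible as $|A\cap B|<k\le|b|$. Hence $(A,B)\in O$.

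The only non-trivial step is the reverse inclusion in the forward direction, where the maximality of $b$ has to be converted into an actual separation of small order witnessing that a putative extra vertex $v$ sits on the wrong side of some $(A,B)\in O$; everything else is bookkeeping with the definitions of $O_k(b)$ and~$b_k(O)$.
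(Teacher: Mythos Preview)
Your proof is correct and follows essentially the same route as the paper's: in each direction one inclusion is immediate from the definitions, the reverse inclusion $b_k(O)\sub b$ comes from the maximality of~$b$ producing a separation in~$S_k$ that places any $v\notin b$ on the wrong side, and the inclusion $O_k(b)\sub O$ comes from $|b|\ge k > |A\cap B|$. The only cosmetic difference is the order of deductions in the forward direction (the paper first chooses the separating $(A,B)$ to lie in~$O$ and then reads off $b\sub B$, whereas you do it the other way round).
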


\begin{proof}
Assume first that $O = O_k(b)$.%
   \COMMENT{}
   Then $O$ consists of all the separations $(A,B)\in S_k$ such that $b\sub B$. So the intersection of all those~$B$ contains~$b$ too, and this intersection is $b_k(O)$. Thus, $b\sub b_k(O)$.

Conversely, since $b$ is maximal as an $S_k$-inseparable set of vertices, any vertex $v\notin b$ is separated from some vertex of~$b$ by a separation in~$S_k$, and hence also by some separation $(A,B)\in O$.%
   \COMMENT{}
   Since $b\sub B$ by definition of $O=O_k(b)$, this means that $v\in A\sm B$. So $v$ lies outside~$B$, and hence also outside the intersection $b_k(O)$ of all $B$ with $(A,B)\in O$. Thus, $b_k(O)\sub b$.

Assume now that $b = b_k(O)$.%
   \COMMENT{}
   For every $(A,B)\in O$, its `large side' $B$ trivially contains the intersection of all such `large sides' of separations in~$O$. So $B\supe b_k(O) = b$ and hence $(A,B)\in O_k(b)$, proving $O\sub O_k(b)$.

Conversely, if $(A,B)\in O_k(b)$, then $b_k(O) = b\sub B$. Since $b_k(O)$ is a $k$-block%
  \COMMENT{}
   and hence $|b_k(O)|\ge k$, this means that $b_k(O)\not\sub A$. Since $(B,A)\in O$ would imply $b_k(O)\sub A$, by definition of~$b_k(O)$, we thus have $(B,A)\notin O$ and hence $(A,B)\in O$, as desired.
\end{proof}

Thus, $k$-blocks `are' precisely the consistent $\B_k$-avoiding orientations of~$S_k$. When we now treat $k$-blocks in our duality framework, we shall use the term `$k$-block' also to refer to these orientations.

Rephrasing $k$-blocks as orientations of~$S_k$ throws up an interesting connection between blocks and brambles or havens that had not been noticed before. As $\B_k$ is closed under taking subsets, it contains the set $\B_k^*$ defined in Section~\ref{sec:stars}:
 $$\B_k^* = \{\, S\in\B_k : S\text{ is a weak star}\}$$
 Hence by Proposition~\ref{lem:stars} and Lemma~\ref{lem:blocks}, the $k$-blocks of $G$ are precisely the consistent orientations of $S_k$ that contain no weak star from~$\B_k$. If we delete the word `weak' in this sentence, we obtain the consistent orientations of~$S_k$ that avoid
 $$\S_k^* = \{\, S\in\B_k : S\text{ is a star}\},$$
 which are precisely the dual objects to \td s of width~$<k-1$.%
   \footnote{As shown in~\cite{DiestelOumDualityI}, $G$ has a haven of order at least~$k$, or equivalently a bramble of order at least~$k$, if and only if $S_k$ has a consistent $S_k^*$-avoiding orientation.}

Our General Duality Theorem thus specializes to blocks as follows:

\begin{THM}\label{thm:blocks}
For every finite graph $G$ and $k>0$ the following statements are equivalent:
\vskip-3pt\vskip0pt
\begin{enumerate}[\rm(i)]\itemsep=0pt
 \item $G$ contains a $k$-block.
 \item $S_k$ has a $\B_k$-avoiding orientation (which is consistent and extends~$S_k^-$).
 \item $S_k$ has a consistent $\B_k^*$-avoiding orientation (which extends~$S_k^-$).
 \item There is no $S_k$-graph over~$\B_k$ rooted in~$S_k^-$.
 \item There is no $S_k$-graph over~$\B_k^*$ rooted in~$S_k^-$.\qed
\end{enumerate}
\end{THM}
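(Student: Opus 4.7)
The plan is to derive all five equivalences as a chain of direct applications of the tools just assembled. First, for (i)$\Leftrightarrow$(ii), I would simply invoke Lemma~\ref{lem:blocks}, which via $b\mapsto O_k(b)$ and $O\mapsto b_k(O)$ sets up a bijection between the $k$-blocks of~$G$ and the $\B_k$-avoiding orientations of~$S_k$. The parenthetical qualifiers in~(ii) require no extra work: as recorded just before Lemma~\ref{lem:blocks}, every $\B_k$-avoiding orientation of~$S_k$ is automatically consistent, and the display defining $S_k^-$ at the start of this section shows that every consistent orientation of~$S_k$ automatically extends~$S_k^-$.

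Next, for (ii)$\Leftrightarrow$(iii), I would apply Lemma~\ref{lem:stars} with $R=S_k$ and $\F=\B_k$: a consistent orientation of~$S_k$ avoids $\B_k$ if and only if it avoids~$\B_k^*$. Combined with the automatic consistency from the previous step, this turns (ii) into (iii) and back again.

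The equivalences (ii)$\Leftrightarrow$(iv) and (iii)$\Leftrightarrow$(v) are then immediate from the General Duality Theorem~\ref{thm:gen}, applied with $S=S_k$, $S^-=S_k^-$, and $\F$ taken to be $\B_k$ (respectively~$\B_k^*$)\,---\,understanding, in line with the remarks following Theorem~\ref{thm:gen}, that `$S_k$-graph' in~(iv) and~(v) is to be read as cusped (equivalently, constructible) $S_k$-graph. Together with the first two steps, this completes the cycle of equivalences.

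I do not expect any genuine obstacle here: the proof is essentially a concatenation of Lemmas~\ref{lem:blocks} and~\ref{lem:stars} with Theorem~\ref{thm:gen}. The only subtlety worth flagging is the bookkeeping of the `consistent' and `extends~$S_k^-$' qualifiers, which are silently automatic in~(ii) but explicit in (iii)--(v); once one observes that being $\B_k$-avoiding already implies both, the passage between the different forms of the conclusion reduces to a rewording.
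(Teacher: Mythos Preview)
Your proposal is correct and matches the paper's own treatment exactly: the theorem is stated with a \qed\ and no separate proof, precisely because it is meant as an immediate specialization of Theorem~\ref{thm:gen} combined with Lemmas~\ref{lem:blocks} and~\ref{lem:stars} and the observations preceding them. Your explicit chain of equivalences is just a careful unpacking of this.
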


\noindent
   We remark that, by Theorem~\ref{thm:gen}, the $S_k$-graphs in (iv) and~(v) can be chosen to be constructible over~$\B_k$ or~$\B_k^*$, respectively.

\medbreak

To wind up our treatment of blocks, let us mention a quantitative%
   \footnote{rather than structural, as our Theorem~\ref{thm:blocks}}
   duality theorem for blocks proved in~\cite{ForcingBlocks}. There, the \emph{block-width} ${\rm bw}(G)$ of~$G$ is defined as the least integer~$\ell$ such that $G$ can be divided recursively into parts of size at most~$\ell$ by separations of~$G$%
   \COMMENT{}
   of order at most~$\ell$. (See~\cite[Section~7]{ForcingBlocks} for details.) It is not hard to show that this can be done if and only if $G$ has no $k$-block for $k>\ell$. The \emph{block number}
 $$\beta(G) := \max\,\{\,k : G\text{ has a $k$-block}\,\}$$
thus comes with the following quantitative duality~\cite{ForcingBlocks}:

\begin{PROP}
Every finite graph $G$ satisfies $\beta(G) = {\rm bw}(G)$.\qed
\end{PROP}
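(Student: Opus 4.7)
The plan is to reduce the proposition to the equivalence
$${\rm bw}(G) \le \ell \iff \beta(G) \le \ell,$$
from which the equality ${\rm bw}(G) = \beta(G)$ follows immediately by taking the least such $\ell$ on each side. This equivalence is hinted at in the sentence immediately preceding the proposition and, according to the text, spelled out in~\cite{ForcingBlocks}; my plan is to verify it using Theorem~\ref{thm:blocks}.

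The forward direction ${\rm bw}(G) \le \ell \Rightarrow \beta(G) \le \ell$ is elementary. Suppose $G$ admits a recursive division into parts of size $\le \ell$ by separations of order $\le \ell$, and assume for contradiction that $G$ has a $k$-block $b$ with $k > \ell$. Every separation appearing in the division has order $\le \ell < k$, so by the definition of a $k$-block it cannot separate any two vertices of~$b$; hence $b$ lies on a single side of every such separation, and therefore inside some single part of the division. But then $k \le |b| \le \ell$, a contradiction.

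For the reverse direction I~would invoke Theorem~\ref{thm:blocks}: if $\beta(G) \le \ell$, then $G$ has no $(\ell+1)$-block, so by clause~(iv) of that theorem there is an $S_{\ell+1}$-graph $(H,\alpha)$ over $\B_{\ell+1}$ rooted in $S_{\ell+1}^-$, and by the remark following the theorem (which in turn invokes the constructibility clause of Theorem~\ref{thm:gen}) we may take $(H,\alpha)$ to be constructible. The substantive step is then to unfold the inductive construction of $(H,\alpha)$ according to (S1) and~(S2) into a recursive division of~$G$: the base case~(S1) is a subdivided star whose separations form a set in $\B_{\ell+1}$, which by the definition of $\B_{\ell+1}$ yields a central part $\bigcap B_i$ of size $< \ell+1$ together with boundary separations of order $\le \ell$; each (S2)-amalgamation then glues two sub-decompositions together along a common separation of order $\le \ell$ at a single cutvertex.

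The main obstacle is precisely this last step: matching the $M$-nodes and the partitions $E'(m), E''(m)$ of a constructible $S$-graph to the recursive-division format of~\cite{ForcingBlocks} requires careful bookkeeping, since an $M$-vertex plays a structurally different role from a node of a decomposition tree. Because (S2) glues at a single cutvertex and the partitions $E'(m),E''(m)$ record exactly which sub-decomposition each incident piece came from, the translation should be mechanical; once it is carried out, the decomposition produced witnesses ${\rm bw}(G) \le \ell$, completing the proof.
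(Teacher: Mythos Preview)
The paper does not prove this proposition: it is quoted from~\cite{ForcingBlocks}, with the remark just before it that the underlying equivalence ``is not hard to show''. So there is no proof in the paper to compare against; I can only assess your argument on its own.

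Your forward direction is correct and is the natural argument.

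Your reverse direction, however, has a genuine gap. You propose to extract a recursive division of~$G$ from a constructible $S_{\ell+1}$-graph $(H,\alpha)$ over~$\B_{\ell+1}$. But the message of this very paper is that such $S$-graphs cannot in general be replaced by trees: Example~\ref{ex:JC} exhibits a graph with no $6$-block that admits \emph{no} $S$-graph $(H,\alpha)$ over~$\B_6$ with $H$ a tree. Since a ``recursive division into parts'' is inherently tree-shaped, there can be no mechanical translation from cusped $S$-graphs to recursive divisions. Following the construction recursion (S1)/(S2) instead of~$H$ itself does not help: although that recursion is tree-shaped, an amalgamation step~(S2) along $(C,D)$ does \emph{not} split the problem into a division of~$G[C]$ and one of~$G[D]$. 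Both $(H',\alpha')$ and $(H'',\alpha'')$ are $S$-graphs for all of~$S_{\ell+1}$, differing only in their leaf separations; the separations occurring in~$H'$ need not be nested with~$(C,D)$, need not restrict to one side of it, and the (S2)-separations at different levels need not even be distinct. So the sentence ``the translation should be mechanical'' hides exactly the difficulty the paper is about.

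The argument in~\cite{ForcingBlocks} is direct and does not pass through Theorem~\ref{thm:blocks}: if $|V(G)|\le\ell$ one stops; otherwise $V(G)$, having more than~$\ell$ vertices but containing no $(\ell{+}1)$-block, has two vertices separated by at most~$\ell$ others, giving a proper separation $(A,B)$ of order~$\le\ell$; one then recurses on each side, checking that an $(\ell{+}1)$-block of a side would yield one of~$G$. This avoids $S$-graphs entirely, which is why the paper calls it ``not hard''.
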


Let us now turn to profiles. A~\emph{$k$-profile} of $G$ is a consistent orientation $P$ of $S_k$ satisfying
  $$(A,B), (C,D)\in P\ \Rightarrow\ (B\cap D, A\cup C)\notin P.\eqno(\rm P)$$
 This indirect definition is convenient, because it is not clear whether the separation $(A\cup C, B\cap D)$ is in~$S_k$, i.e., has order~$<k$: if it does, it must be in~$P$, but if it does not, then there is no requirement. So the $k$-profiles of $G$ are precisely the consistent orientations of~$S_k$ that avoid
 $$\P_k = \big\{\,\{(A,B),(C,D),(E,F)\}\sub S_k : (E,F) = (B\cap D,A\cup C)\big\}. $$
Let $\P_k^*$ and $\P_k^-$ be obtained from $\P_k$ as defined in Section~\ref{sec:stars}. While crossing sets $S\in\P_k$ will be weak stars, some nested ones are not; in particular, $S$~may be inconsistent.%
   \COMMENT{}
   However, if $ S=\{(A,B),(C,D),(E,F)\}$ with $(A,B)\le (C,D)$, then $S^+ = \{(C,D),(C,D),(D,C)\}\in\P_k$, so $\P_k^*\sub\P_k$.

Our General Duality Theorem specializes to profiles as follows:

\begin{THM}\label{thm:profiles}
For every finite graph $G$ and $k>0$ the following statements are equivalent:
\vskip-3pt\vskip0pt
\begin{enumerate}[\rm(i)]\itemsep=0pt
 \item $G$ contains a $k$-profile.
 \item $S_k$ has a consistent $\P_k$-avoiding orientation (which extends~$S_k^-$).
 \item There is no $S_k$-graph over~$\P_k$ rooted in~$S_k^-$.\qed
\end{enumerate}
\end{THM}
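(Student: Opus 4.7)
The plan is to deduce Theorem~\ref{thm:profiles} directly from the General Duality Theorem (Theorem~\ref{thm:gen}), in exactly the same way as Theorem~\ref{thm:blocks} was specialized for blocks.

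First I would establish (i)~$\Leftrightarrow$~(ii) by rephrasing axiom~(P) as avoidance of~$\P_k$. Let $P$ be a consistent orientation of~$S_k$ and suppose $(A,B),(C,D)\in P$. Whenever $(A\cup C,B\cap D)\in S_k$, its inverse $(B\cap D,A\cup C)$ is in $S_k$ too, and $P$ contains exactly one of the two; so ``$(A\cup C,B\cap D)\in P$'' is equivalent to ``$(B\cap D,A\cup C)\notin P$''. Thus axiom~(P) is equivalent to saying that $P$ contains no triple $\{(A,B),(C,D),(B\cap D,A\cup C)\}\sub S_k$, i.e.\ to $P$ avoiding~$\P_k$. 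Combined with the observation recalled just before the theorem --- that every consistent orientation of $S_k$ automatically extends~$S_k^-$ --- this identifies the $k$-profiles of $G$ with the consistent $\P_k$-avoiding orientations of~$S_k$, giving (i)~$\Leftrightarrow$~(ii).

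For (ii)~$\Leftrightarrow$~(iii) I would simply invoke Theorem~\ref{thm:gen} with $S:=S_k$, $S^-:=S_k^-$ and $\F:=\P_k$. The separation system $S_k$ is finite (since $G$ is), and $\P_k\sub 2^{S_k}$ by construction, so the hypotheses are satisfied. Theorem~\ref{thm:gen} then yields the equivalence of (ii) with the non-existence of a cusped $S_k$-graph over $\P_k$ rooted in~$S_k^-$, which is the content of~(iii). As remarked after Theorem~\ref{thm:gen}, such a cusped $S_k$-graph may equivalently be required to be constructible over~$\P_k$, if one so wishes.

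There is essentially no obstacle beyond correctly translating axiom~(P) into the avoidance condition for~$\P_k$; the heavy lifting has already been done by the General Duality Theorem. The one point to double-check is the ``indirect'' clause of~(P): when $(A\cup C,B\cap D)\notin S_k$, neither it nor its inverse can belong to~$P$ or to any element of $2^{S_k}$, and $\P_k$ was defined to include only triples in which the third separation does lie in~$S_k$, so the translation is exact.
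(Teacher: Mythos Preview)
Your proposal is correct and follows exactly the paper's approach: the theorem carries a \qed\ because the paper has already observed, in the preceding paragraphs, that $k$-profiles are precisely the consistent $\P_k$-avoiding orientations of~$S_k$ (and that these automatically extend~$S_k^-$), after which the equivalence with~(iii) is immediate from Theorem~\ref{thm:gen}. Your write-up simply makes these two steps explicit.
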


\goodbreak

\noindent
   As before, we can replace $\P_k$ with $\P_k^*$ or~$\P_k^-$ {\sl ad libitum},%
   \COMMENT{}
   and the $S_k$-graph in~(iii) can be chosen to be constructible over $\P_k$, $\P_k^*$ or~$\P_k^-$ as desired.

\medbreak

As in the case of $k$-blocks, one may be tempted to compare the consistent $\P_k$-avoiding orientations of $S_k$ with those that are required only to avoid the (proper) stars in~$\P_k$. If these are still just the $k$-profiles, we can use the Strong Duality Theorem from~\cite{DiestelOumDualityI} to characterize them by proper $S$-trees rather than just $S$-graphs, improving Theorem~\ref{thm:profiles}.

So, given a triple $\{(A,B),(C,D),(E,F)\}\in\P_k$ with $(A,B),(C,D)$ crossing and $(E,F) = (B\cap D,A\cup C)$, we can `uncross' it to obtain the nested triples
 $$(A\cap D,B\cup C), (C,D), (E,F) \quad\text{and}\quad (A,B), (C\cap B, A\cup D), (E,F).$$
 As is easy to check, at least one of $(A\cap D,B\cup C)$ and $(C\cap B, A\cup D)$ must again be in~$S_k$ (by the `submodularity' of the order of graph separations), and it would be natural to expect that the corresponding triple might then be in~$\P_k$. If that was always the case, then every orientation of~$S_k$ avoiding~$\P_k^*$ would in fact avoid all of~$\P_k$. The $k$-profiles of $G$ would then be precisely the consistent orientations of $S_k$ not containing a star in~$\P_k$.

Unfortunately, however, the triple $(A\cap D,B\cup C), (C,D), (E,F)$ (say) can fail to lie in~$\P_k$ for a different reason: even if the separation $(A',B') = (A\cap D,B\cup C)$ replacing $(A,B)$ lies in~$S_k$, it can happen that $(E,F)\ne (A'\cup C,B'\cap D)$. We leave the details to the reader to check.%
   \COMMENT{}

\medbreak

In the remainder of this section we shall see that this is not just an obstacle that might be overcome: $k$-profiles are indeed not in general dual to $S_k$-trees over stars in~$\P_k$, and $k$-blocks are not in general dual to $S_k$-trees over stars in~$\B_k$ (all over~$S_k^-$).%
   \footnote{As noted earlier, the latter also follows indirectly from the fact that $k$-blocks are not the same as $k$-havens, which \emph{are} dual to the $S_k$-trees over stars in~$\B_k$~\cite{DiestelOumDualityI}. The $S_k^-$ considered in~\cite{DiestelOumDualityI} was slightly larger, containing also the proper separations $(A,B)$ with $|A|<k$, but $S_k$-trees over~$\B_k^*$ rooted in these can easily be extended to $S_k$-trees over~$\B_k^*$ rooted in our~$S_k^-$.}%
   \COMMENT{}

  \begin{figure}[htpb]
\centering
   	  \includegraphics[width=\textwidth]
            {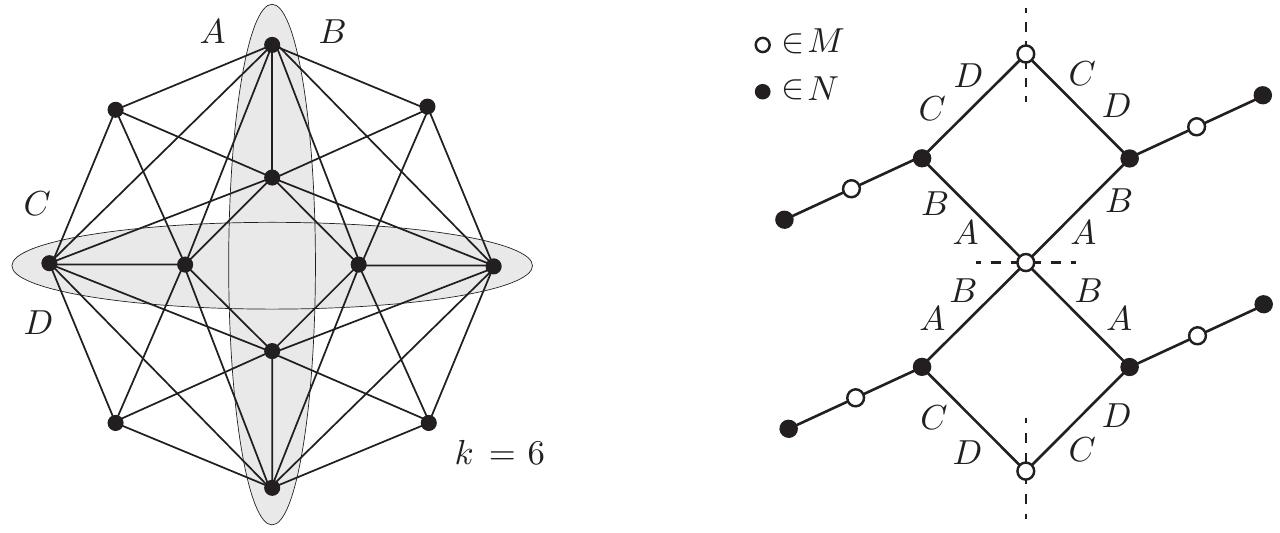}
   	  \caption{The graph of Example~\ref{ex:JC} and its $S$-graph}
   \label{fig:JC}
   \end{figure}

\begin{EX}\label{ex:JC}\rm\sloppy
Consider the graph $G$ on a set $V$ of 12 vertices shown in Figure~\ref{fig:JC}. It has five $K_5$s separated in pairs by two crossing separations $(A,B),(C,D)\in S_5$. The only other proper separations are of the form ${(E,F)\in S_5}$ (or its inverse) with $E$ spanning one of $K_5$s and $F=V\sm\{v\}$ for the `corner vertex'~$v$ of that~$K_5$.

Note that every pair of non-adjacent vertices is separated by one of these 4-separations. Hence $G$ has four 5-blocks, the $K_5$s, but no 6-block. It also has no $6$-profile.%
   \COMMENT{}
   Indeed, if it did then by symmetry we could assume that this profile~$P$ contains $(A,B)$ and~$(C,D)$. By condition~(P), it then also contains the 4-separation $(F,E)$ with $F= A\cup C$ and $E= (B\cap D)$ spanning the bottom-right~$K_5$ in Figure~\ref{fig:JC}. (We call this a \emph{corner separation}.) As $|E|<6$, we also have $(E,V)\in S_6^-\sub P$. Applying~(P) to $(F,E)$ and~$(E,V)$ we obtain $(E,V) = (E\cap V, F\cup E)\notin P$,%
   \COMMENT{}
    a contradiction.

However, $G$ has no `generalized $S$-tree'~-- i.e., no $S$-graph $(H,\alpha)$ with $H$ a tree~-- over either $\B_6$ or~$\P_6$ and rooted in~$S_6^-$. Indeed, by examining $\B_6$ and~$\P_6$ it is not hard to show that in any such $(H,\alpha)$ at least two edges at any interior node must map to one of the separations $(A,B), (C,D)$ or its inverse, so these edges will lead to another interior node.%
   \COMMENT{}
   We can thus find either a cycle or an infinite paths in~$H$, a contradiction.%
   \COMMENT{}
\end{EX}

Figure~\ref{fig:JC} shows an $S_6$-graph of $G$ over $\P_6\sub\B_6$, rooted in~$S_6^-$. The unlabelled edges at the leaves map to the `corner separations' $(E,F)$ mentioned earlier, with $E$ spanning a~$K_5$.

However, it is still conceivable (although we consider it unlikely) that $k$-blocks and $k$-profiles are dual to $S_k$-trees over some other collection~$\F$, even of stars, in~$S_k$. For example, for profiles we might consider as $\F$ the 3-stars of separations in~$S_k$ that are the result of `uncrossing' a triple violating~(P):
  $$\F := \Big\{ \{(A,B),(C,D),(E,F)\}\sub S_k\ \Big|\ \exists\, (A',B')\in S_k :
     {{(A,B) = (A'\cap D, B'\cup C)}\atop {(E,F) = (B'\cap D, A'\cup C)}}\Big\} $$
(Fig.~\ref{fig:uncross}).
  \begin{figure}[htpb]
\centering
   	  \includegraphics
             {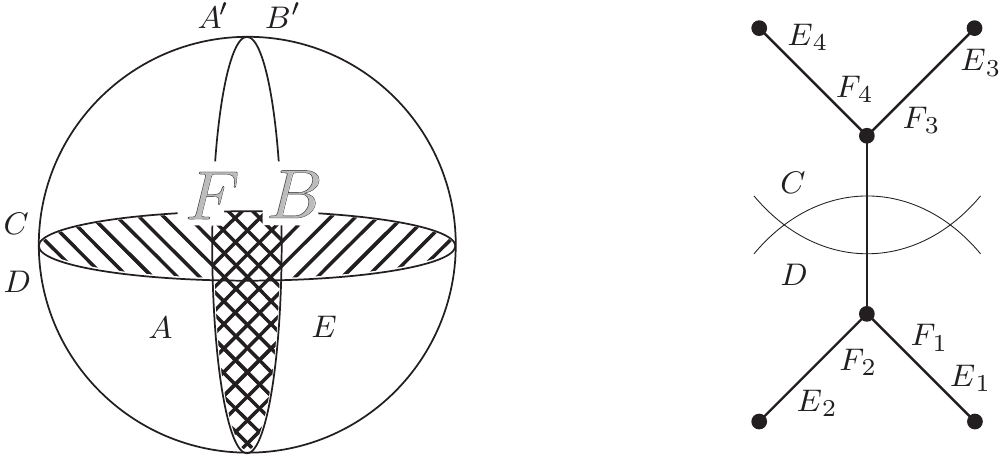}
   	  \caption{Uncrossing a weak star violating~(P), and an $S_k$-tree over~$\F$}
   \label{fig:uncross}
   \end{figure}
The graph $G$ of Example~\ref{ex:JC} has an $S_k$-tree $(T,\alpha)$ over this~$\F$, in which $T$ has four leaves corresponding to the four corner separations and two internal nodes joined by an edge whose orientations map to $(C,D)$ and~$(D,C)$.

\begin{PROBLEM}\label{problem:uncross}
Is $S_k$ $\F$-separable in the sense of~{\rm\cite{DiestelOumDualityI}}?
\end{PROBLEM}

\noindent
  A positive answer would make \cite[Theorem~4.4]{DiestelOumDualityI} applicable to $S_k$ with this~$\F$. The submodularity of~$S_k$ implies that every triple of separations in $S_k$ that violates~(P) can be `uncrossed', in one of two potential ways, to yield a triple in~$\F$. Hence the $k$-profiles are also precisely the $\F$-avoiding consistent orientations of~$S_k$.%
   \COMMENT{}
   A~positive answer to Problem~\ref{problem:uncross} would thus yield a `strong' duality theorem~-- one with $S_k$-trees over stars~-- for $k$-profiles after all.

\bibliographystyle{abbrv}
\bibliography{collective}
\end{document}